\documentclass[preprint,12pt]{elsarticle}

\usepackage[utf8]{inputenc}
\usepackage[T1]{fontenc}
\usepackage{amsmath,amssymb,amsthm,mathtools}
\usepackage{enumitem}
\usepackage{array}
\usepackage{hyperref}

\hypersetup{hidelinks}

\theoremstyle{plain}
\newtheorem{theorem}{Theorem}[section]
\newtheorem{lemma}[theorem]{Lemma}
\newtheorem{proposition}[theorem]{Proposition}
\newtheorem{corollary}[theorem]{Corollary}

\newtheorem{problem}[theorem]{Problem}

\theoremstyle{definition}
\newtheorem{definition}[theorem]{Definition}

\newtheorem{counterexample}[theorem]{Counterexample}

\theoremstyle{remark}
\newtheorem{remark}[theorem]{Remark}

\newcommand{\R}{\mathbb{R}}
\newcommand{\N}{\mathbb{N}}

\newcommand{\Lap}{\mathcal{L}}
\newcommand{\CM}{\mathrm{CM}}
\newcommand{\BF}{\mathrm{BF}}
\newcommand{\CBF}{\mathrm{CBF}}
\newcommand{\LCM}{\mathrm{LCM}}
\newcommand{\St}{\mathcal{S}}
\newcommand{\LP}{\mathcal{LP}}
\newcommand{\WL}{\mathsf{W}_{\!L}}
\newcommand{\WS}{\mathsf{W}_{\!S}}
\newcommand{\WH}{\mathsf{W}_{\!H}}
\DeclareMathOperator{\arsinh}{arsinh}
\DeclareMathOperator{\arcosh}{arcosh}
\DeclareMathOperator{\Imag}{Im}

\begin{document}

\begin{frontmatter}

\title{Determinacy Witnesses in the Completely\\
Monotone--Stieltjes--Bernstein Hierarchy}
\author[addr1]{Domingos S. P. Salazar\corref{cor1}}
\ead{domingos.salazar@ufrpe.br}
\address[addr1]{Unidade de Educa\c{c}\~ao a Dist\^ancia e Tecnologia,
Universidade Federal Rural de Pernambuco,
52171-900 Recife, Pernambuco, Brazil}
\cortext[cor1]{Corresponding author.}

\begin{abstract}
Membership problems in the completely monotone--Stieltjes--Bernstein
hierarchy often become explicit after passage to a determinate
representation. We organize that passage as a witness calculus. For the
Laplace, Stieltjes, and Hausdorff transforms used here, \(\mathsf W f\)
denotes the unique admissible signed representing datum; a certificate either
proves its positivity or exhibits an obstruction. A concrete dictionary for
completely monotone, logarithmically completely monotone, Bernstein, complete
Bernstein, and Stieltjes functions and for Hausdorff moment sequences is
combined with transport rules for products, shifts, sampling, and Bernstein
increments.

The applications are consequences of this common calculation. A
triple-product identity for a digamma bridge controls ordinary Bernstein and
logarithmic complete monotonicity ranges for gamma ratios and gives the strict
power cutoff in Szab\'o's problem. For integer gamma quotients, the inverse
Laplace density is a damped Jacobi polynomial, so classical zero geometry
decides broad parameter regions. Boundary signs distinguish Bernstein from
complete Bernstein behavior and separate completely monotone functions from
the Stieltjes class. On the discrete side, signed atoms and finite differences
settle interpolation and coefficient questions. The Ramanujan integral and a
fractional Volterra symbol provide positive recognition examples. Across the
applications, the workflow is the same: normalize the class question, compute
or transport its witness, and decide one explicit sign problem. Every
exclusion is accompanied by an exact certificate.
\end{abstract}

\begin{keyword}
completely monotone functions \sep Bernstein functions \sep Stieltjes functions \sep Pick functions \sep Hausdorff moment problem \sep gamma and digamma functions \sep Jacobi polynomials \sep Laguerre--P\'olya class
\MSC[2020] 26A48 \sep 33B15 \sep 33C45 \sep 44A10 \sep 44A60 \sep 41A10 \sep 47B37
\end{keyword}

\end{frontmatter}

\section{Introduction}

Complete monotonicity packages infinitely many derivative inequalities into
one class condition. The Bernstein--Widder theorem replaces those inequalities
by positivity of a unique Laplace measure; Stieltjes, Bernstein, complete
Bernstein, and Hausdorff moment problems have parallel determinate
representations \cite{bernstein,hausdorff,widder,schilling-song-vondracek}.
For structured functions, the representation can be easier to sign than the
initial derivative tower.

Inverse-transform proofs have a long history in special-function inequalities.
Ismail, for example, uses inverse Laplace and Stieltjes transforms to prove
complete monotonicity for quotients of Bessel functions
\cite{ismail-bessel}. Integral representations have also been formulated as
positivity certificates in a broader algebraic setting
\cite{kozhasov-michalek-sturmfels}. We organize three classical determinacy
principles into one proof architecture for the completely
monotone--Stieltjes--Bernstein hierarchy and its discrete boundary. The
abstract implication is elementary. The contribution lies in explicit
witness formulas, transport rules, and exact sign certificates for the
applications below.

The architecture has three stages:
\begin{equation}\label{eq:workflow}
\text{reduce}
\ \longrightarrow\
\text{compute }\mathsf W
\ \longrightarrow\
\text{sign }\mathsf W.
\end{equation}
Here a \emph{witness} is the full inverse datum in an injective transform. A
\emph{certificate} is the sign argument applied to that datum or to an
equivalent analytic criterion. This distinction allows a negative atom or
density to be treated differently from a forbidden Pick boundary sign. The
preprocessing maps that make the main applications natural are summarized in
Table~\ref{tab:dictionary} and formalized in Section~\ref{sec:prelim}.

\begin{table}[t]
\centering
\caption{Class questions and their canonical reductions.}
\label{tab:dictionary}
\footnotesize
\begin{tabular}{@{}>{\raggedright\arraybackslash}p{0.22\linewidth}
>{\raggedright\arraybackslash}p{0.31\linewidth}
>{\raggedright\arraybackslash}p{0.38\linewidth}@{}}
\hline
Question & Preprocessing & Decisive datum \\
\hline
\(f\in\CM\) & none & Laplace witness \(\WL[f]\) \\
\(f\in\LCM\), \(g\in\BF\) &
\(- (\log f)'\), respectively \(g'\) & corresponding Laplace witness \\
\(f\in\St\), \(g\in\CBF\) &
\(f\), respectively \(1/g\) & Stieltjes witness \(\WS\); Pick-sign certificate \\
Hausdorff sequence & moments or finite differences &
Hausdorff witness \(\WH\); difference table \\
Bernstein interpolation & unit increments & Hausdorff witness of the increments \\
\hline
\end{tabular}
\end{table}

The first application family is governed by the digamma bridge
\begin{equation}\label{eq:intro-bridge}
H_\alpha(x)=\psi(x+\alpha)-\psi(x)-\frac{\alpha}{x}
=(\log F_\alpha)'(x),
\qquad
F_\alpha(x)=\frac{\Gamma(x+\alpha)}{x^\alpha\Gamma(x)}.
\end{equation}
Its triple-product expansion fixes the sign of \(H_\alpha\) for every
\(\alpha>0\). A complement measure then gives the exact range
\(F_\alpha\in\BF\) if and only if \(0\le\alpha\le1\), answering Simon's
question \cite{simon-moment}. Simon already records that the nontrivial range
lies outside \(\CBF\) \cite[p.~591]{simon-moment}; we add a direct
boundary-phase certificate that displays the obstruction. The same bridge
gives an ordinary \(\LCM/\CM\) proof for the reciprocal ratio and the strict
cutoff in Szab\'o's power-weight problem \cite{szabo-cm}. Recent work of
Koumandos and Pedersen places the reciprocal ratio in stronger generalized
Bernstein and generalized Stieltjes frameworks
\cite{koumandos-pedersen-two-parameter,
koumandos-pedersen-approximations}; the role of the present bridge is to
compress the ordinary classifications and their certificates.

The second family starts from the symmetric gamma quotient
\begin{equation}\label{eq:intro-gab}
g_{a,b}(y)=
\frac{\Gamma(y+a)\Gamma(y+b)}
{\Gamma(y)\Gamma(y+a+b)}.
\end{equation}
For integer \(a\), the inverse Laplace transform of \(g_{a,b}'\) is
\begin{equation}\label{eq:intro-jacobi}
(-1)^{a+1}b\,t e^{-bt}
P_{a-1}^{(b-a,1)}(1-2e^{-t}).
\end{equation}
The Bernstein question of Baricz \cite{baricz-turan} is thereby reduced to
Jacobi-polynomial sign geometry. Orthogonal zeros settle \(b>a-1\), pole--zero
interlacing settles \(0<b\le1\), and the tail sign excludes odd intermediate
windows.

The remaining applications test the other two channels. A Pick argument
upgrades Wakrim's sharp Bernstein range for a fractional Volterra symbol to
the complete Bernstein class \cite{wakrim-w}; an opposite boundary sign
separates a logarithmic derivative built from \(\arsinh^2\sqrt{x}\) from the
Stieltjes cone \cite{jedidi-vakeroudis}. A sine mixture recognizes the
Ramanujan integral as Stieltjes and its correctly normalized primitive as
complete Bernstein \cite{hardy,mishra-swaminathan}. On the discrete side,
signed geometric atoms classify Bernstein interpolation for geometrically
regular weighted-shift sequences \cite{benhida-curto-exner}; an exact seventh
difference settles the Berg--Pedersen Hausdorff question
\cite{berg-pedersen-horn}; and a Hausdorff representation of exponential
generating functions gives rigidity for Laguerre--P\'olya coefficient
sequences \cite{wang-yang}. Restrictions of completely monotone functions to
integer lattices and their converses have also been studied systematically by
Aguech and Jedidi \cite{aguech-jedidi}.

Section~\ref{sec:prelim} develops the witness dictionary and its transport
laws. Sections~\ref{sec:bridge} and
\ref{sec:quotient-obstructions} treat Laplace witnesses for gamma quotients.
Sections~\ref{sec:symbols} and~\ref{sec:stieltjes} treat Pick and Stieltjes
certificates. Section~\ref{sec:discrete} treats Hausdorff witnesses. The final
section records the limitations of the method and two open witness-sign
problems.

\section{Determinacy witnesses and transport}\label{sec:prelim}

We begin with the standard classes and then isolate the common logical step.
The representation theory is classical; see
\cite{widder,schilling-song-vondracek}.

\begin{definition}[The hierarchy]\label{def:classes}
Let all functions below be defined on \((0,\infty)\) and take values in
\([0,\infty)\).
\begin{enumerate}[label=\textup{(\roman*)},leftmargin=2.3em]
\item \(f\) is \emph{completely monotone} \((f\in\CM)\) if
\(f\in C^\infty\) and \((-1)^nf^{(n)}(x)\ge0\) for every \(n\ge0\) and
\(x>0\).
\item \(f\) is a \emph{Stieltjes function}, written \(f\in\St\), if
\begin{equation}\label{eq:stieltjes-representation}
f(x)=\frac{a}{x}+b+\int_{(0,\infty)}\frac{\sigma(dt)}{x+t},
\qquad
a,b\ge0,
\quad
\int_{(0,\infty)}\frac{\sigma(dt)}{1+t}<\infty.
\end{equation}
\item \(g\) is a \emph{Bernstein function}, written \(g\in\BF\), if
\(g\in C^\infty\) and \(g'\in\CM\). Equivalently, its unique
L\'evy--Khintchine triplet satisfies
\begin{equation}\label{eq:levy-khintchine}
g(x)=a+bx+\int_{(0,\infty)}(1-e^{-xt})\,\nu(dt),
\qquad
a,b\ge0,
\quad
\int(1\wedge t)\,\nu(dt)<\infty
\end{equation}
\cite[Theorem~3.2]{schilling-song-vondracek}. We use the finite boundary
value \(g(0):=g(0^+)=a\) whenever integer interpolation includes \(0\).
\item A Bernstein function is a \emph{complete Bernstein function}, written
\(g\in\CBF\), if its L\'evy measure is absolutely continuous with a
completely monotone density.
\item A positive \(f\) is \emph{logarithmically completely monotone}, written
\(f\in\LCM\), if \(- (\log f)'\in\CM\). One has
\(\LCM\subsetneq\CM\) \cite{qi-chen,berg-mediterr}; \(\LCM\) is Horn's
class of infinitely divisible completely monotone functions
\cite{horn,berg-mediterr}.
\end{enumerate}
A sequence \((c_n)_{n\ge0}\) is \emph{completely monotone} if
\(\Delta^kc_n\ge0\) for all \(n,k\ge0\), where
\(\Delta c_n=c_n-c_{n+1}\).
\end{definition}

\begin{definition}[Determinate witness]\label{def:witness}
Let \(\mathcal E\) be a real vector space of admissible signed data,
\(\mathcal E_+\) its positive cone, and
\(T:\mathcal E\to\mathcal F\) an injective linear map. For
\(f\in T(\mathcal E)\), define its \emph{\(T\)-witness} by
\begin{equation}\label{eq:abstract-witness}
\mathsf W_T[f]:=T^{-1}f.
\end{equation}
A \emph{certificate} is a verifiable argument that
\(\mathsf W_T[f]\) belongs to \(\mathcal E_+\) or fails to belong to it.
\end{definition}

Injectivity gives the elementary identity
\begin{equation}\label{eq:determinacy-principle}
f\in T(\mathcal E_+)
\quad\Longleftrightarrow\quad
\mathsf W_T[f]\in\mathcal E_+.
\end{equation}
The mathematical work begins after this reduction: one must compute the
witness in the admissible signed class and make its sign easier than the
initial class condition.

\begin{proposition}[The concrete witness dictionary]\label{prop:witness}
The following determinate transforms will be used.
\begin{enumerate}[label=\textup{(\roman*)},leftmargin=2.3em]
\item Let \(\mathcal M_L\) be the signed Borel measures \(\mu\) on
\([0,\infty)\) satisfying
\begin{equation}\label{eq:laplace-admissibility}
\int_{[0,\infty)}e^{-xt}\,d|\mu|(t)<\infty
\qquad(x>0).
\end{equation}
If
\begin{equation}\label{eq:laplace-witness}
f(x)=\int_{[0,\infty)}e^{-xt}\,d\mu(t),
\end{equation}
then \(\WL[f]:=\mu\) is unique in \(\mathcal M_L\), and
\(f\in\CM\) if and only if \(\WL[f]\ge0\).

\item If
\begin{equation}\label{eq:hausdorff-map}
a_n=\int_{[0,1]}s^n\,d\rho(s)
\end{equation}
for a finite signed measure \(\rho\), then \(\WH[a]:=\rho\) is unique. A
sequence is a Hausdorff moment sequence if and only if its difference table
\begin{equation}\label{eq:hausdorff-witness}
D_a(m,n):=\Delta^ma_n
=\sum_{j=0}^{m}(-1)^j\binom mj a_{n+j}\ge0
\qquad(m,n\ge0).
\end{equation}
When \(\rho\) exists,
\(D_a(m,n)=\int s^n(1-s)^m\,d\rho(s)\).

\item The data \((a,b,\sigma)\) in
\eqref{eq:stieltjes-representation} are unique; write
\(\WS[f]=(a,b,\sigma)\), with componentwise positivity. Equivalently,
\(f\in\St\) exactly when \(f\ge0\) on \((0,\infty)\) and \(f\) extends
holomorphically to \(\mathbb C\setminus(-\infty,0]\) with
\(\Imag f(z)\le0\) for \(\Imag z>0\). The measure is recovered in the vague
Stieltjes--Perron sense from
\(-\pi^{-1}\Imag f(-t+i\varepsilon)\,dt\).
\end{enumerate}
For witnesses in the signed classes above, the dictionary is
\begin{equation}\label{eq:witness-dictionary}
\begin{array}{rcl}
f\in\CM
&\Longleftrightarrow& \WL[f]\ge0,\\
f\in\LCM
&\Longleftrightarrow& f>0\ \text{and}\ \WL[-(\log f)']\ge0,\\
g\in\BF
&\Longleftrightarrow& g\ge0\ \text{and}\ \WL[g']\ge0,\\
f\in\St
&\Longleftrightarrow& \WS[f]\ge0,\\
g\in\CBF\setminus\{0\}
&\Longleftrightarrow& g>0\ \text{and}\ \WS[1/g]\ge0.
\end{array}
\end{equation}
For the triplet in \eqref{eq:levy-khintchine}, the Bernstein witness and the
L\'evy measure are related by
\begin{equation}\label{eq:bf-witness-versus-levy}
\WL[g']=b\,\delta_0+t\,\nu(dt).
\end{equation}
\end{proposition}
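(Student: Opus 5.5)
The plan is to treat the proposition as an assembly of classical determinacy theorems and to check each line of \eqref{eq:witness-dictionary} against them. Throughout, every claimed equivalence reduces to injectivity plus a representation theorem, via \eqref{eq:determinacy-principle}.

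\textbf{Uniqueness statements.} For (i), suppose \(\mu_1,\mu_2\in\mathcal M_L\) have the same Laplace transform on \((0,\infty)\). Fix \(x_0>0\) and set \(\nu=e^{-x_0t}(\mu_1-\mu_2)\). Then \(\nu\) is a finite signed measure whose Laplace transform vanishes for \(x>0\), hence also at \(x\to0^+\) by dominated convergence. Substituting \(s=e^{-t}\) turns \(\nu\) into a finite signed measure on \([0,1]\) all of whose moments vanish. By Weierstrass approximation that measure is zero, so \(\mu_1=\mu_2\).

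The same Weierstrass step gives uniqueness of \(\rho\) in (ii).

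For (iii), I use the Stieltjes--Perron inversion. The constant \(b=\lim_{x\to\infty}f(x)\) and the atom \(a=\lim_{x\to0^+}xf(x)\) are determined by \(f\), with dominated convergence justified by \(\int\sigma(dt)/(1+t)<\infty\). The measure \(\sigma\) is then recovered vaguely from \(-\pi^{-1}\Imag f(-t+i\varepsilon)\,dt\), which is the Poisson-kernel approximation of the identity. The Pick-function characterization of \(\St\) is quoted from \cite{schilling-song-vondracek}.

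\textbf{The dictionary lines.}
\begin{itemize}[leftmargin=2.3em]
\item \(\CM\): this is Bernstein--Widder. The direction ``\(\Leftarrow\)'' is differentiation under the integral; the direction ``\(\Rightarrow\)'' is the theorem together with uniqueness from (i).
\item \(\LCM\): this is the \(\CM\) line applied to \(-(\log f)'\).
\item \(\BF\): a Bernstein function is nonnegative and \(g'\in\CM\), so \(\WL[g']\ge0\). Conversely, \(g\ge0\) together with \(g'\in\CM\) is the definition.
\item \(\St\): this is the definition plus uniqueness.
\item \(\CBF\): I invoke the classical theorem \(g\in\CBF\setminus\{0\}\iff 1/g\in\St\) \cite[Theorem~7.3]{schilling-song-vondracek}, and then apply the \(\St\) line.
\end{itemize}

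\textbf{The Hausdorff criterion.} The sequence \((a_n)\) is a Hausdorff moment sequence iff the difference table is nonnegative. This is Hausdorff's theorem, which I cite. The formula \(D_a(m,n)=\int s^n(1-s)^m\,d\rho\) follows by expanding \((1-s)^m\) binomially under the integral.

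\textbf{Relation \eqref{eq:bf-witness-versus-levy}.} Differentiate \eqref{eq:levy-khintchine} under the integral sign, which is justified for \(x>0\) because \(te^{-xt}\le C_x(1\wedge t)\). This gives \(g'(x)=b+\int te^{-xt}\nu(dt)\), the Laplace transform of \(b\delta_0+t\nu(dt)\). That measure lies in \(\mathcal M_L\), so uniqueness in (i) yields the identity.

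\textbf{Main obstacle.} I expect the delicate point to be the admissibility bookkeeping rather than any sign argument. Two things need care:
\begin{itemize}[leftmargin=2.3em]
\item The uniqueness in (i) must hold for signed measures that are only locally Laplace-integrable, not finite; the exponential-tilting reduction above handles this.
\item In the \(\CBF\) line, the ``\(\Leftarrow\)'' direction must use that a positive \(g\) with \(1/g\in\St\) is actually a Bernstein function. For that I rely on the cited theorem and do not reprove it.
\end{itemize}
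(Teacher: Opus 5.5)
Your proposal is correct and follows the paper's proof essentially step for step. For signed uniqueness in (i) and (ii) you use the exponential tilt \(e^{-x_0t}\), the pushforward under \(t\mapsto e^{-t}\), and polynomial density, exactly as the paper does. You cite Bernstein--Widder, Hausdorff, and Schilling--Song--Vondra\v{c}ek for the representation, Pick, and reciprocal facts, and you obtain \eqref{eq:bf-witness-versus-levy} by differentiating the L\'evy--Khintchine formula. Your extra details only spell out steps the paper leaves implicit: recovering \(b\) and \(a\) as the limits of \(f(x)\) and \(xf(x)\), and the domination \(te^{-xt}\le C_x(1\wedge t)\) for differentiating under the integral.
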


\begin{proof}
The positive Laplace assertion is the Bernstein--Widder theorem
\cite{bernstein,widder}. For signed injectivity, suppose \(\Lap\{\mu\}=0\)
and fix \(x_0>0\). The tilt \(e^{-x_0t}\mu(dt)\) is a finite signed measure.
Its pushforward under \(t\mapsto e^{-t}\) has every moment equal to zero, so
uniform density of polynomials in \(C[0,1]\) makes that pushforward, and hence
\(\mu\), zero. Hausdorff's theorem gives the difference criterion
\cite{hausdorff}; the same polynomial-density argument gives signed
determinacy \cite{shohat-tamarkin}. The Stieltjes representation, its inversion, the
Pick characterization, and the reciprocal equivalence with complete
Bernstein functions are in
\cite[Chapters~6 and~7]{schilling-song-vondracek}; see also
\cite{donoghue}. Differentiating \eqref{eq:levy-khintchine} gives
\eqref{eq:bf-witness-versus-levy}.
\end{proof}

\begin{remark}[Witnesses and analytic certificates]\label{rem:calculus}
The symbols \(\WL\), \(\WS\), and \(\WH\) denote full inverse data. A
pointwise cut value is generally a certificate instead: singular components
of a Stieltjes measure need not appear in a pointwise boundary density. One
value with the wrong upper-half-plane sign still excludes Stieltjes or
complete Bernstein membership through the corresponding Pick
characterization.
\end{remark}

The notation becomes useful because standard operations act transparently on
the witnesses.

\begin{proposition}[Transport laws]\label{prop:transport}
\begin{enumerate}[label=\textup{(\roman*)},leftmargin=2.3em]
\item If \(\WL[f]=\mu\), \(\WL[h]=\nu\), \(r,s\in\R\), \(c\ge0\), and
\(\lambda>0\), then, whenever the expressions are defined,
\begin{equation}\label{eq:laplace-transport}
\begin{aligned}
\WL[rf+sh]&=r\mu+s\nu,
&\WL[fh]&=\mu*\nu,\\
\WL[f(\,\cdot+c)]&=e^{-ct}\mu(dt),
&\WL[f(\lambda\,\cdot)]&=(t\mapsto\lambda t)_\#\mu.
\end{aligned}
\end{equation}
Thus \(\CM\) is closed under products, positive linear combinations, and
positive mixtures. If \(f\in\CM\) and \(g\in\BF\), then
\(f\circ g\in\CM\); the reciprocal of a positive Bernstein function is
completely monotone.

\item If \(\phi=\Lap\{\mu\}\in\CM\) and
\(F(x)=\int_0^\infty e^{-xt}\phi(t)\,dt<\infty\), then
\begin{equation}\label{eq:stieltjes-lift}
F(x)=\int_{[0,\infty)}\frac{\mu(ds)}{x+s}\in\St.
\end{equation}

\item One has
\(f\in\St\setminus\{0\}\) if and only if
\(1/f\in\CBF\setminus\{0\}\). Moreover \(\St\subset\CM\), and
\(g\in\CBF\) if and only if \(g\ge0\) on \((0,\infty)\) and \(g\) extends
holomorphically to \(\mathbb C\setminus(-\infty,0]\) with
\(\Imag g(z)\ge0\) for \(\Imag z>0\).

\item If \(\WL[f]=\mu\) and \(\lambda>0\), then the sampled sequence
\(a_n=f(n+\lambda)\) has Hausdorff witness
\begin{equation}\label{eq:laplace-to-hausdorff}
\WH[a]=(t\mapsto e^{-t})_\#\bigl(e^{-\lambda t}\mu(dt)\bigr).
\end{equation}
\end{enumerate}
\end{proposition}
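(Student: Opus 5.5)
The plan is to verify each item directly from the Laplace, Stieltjes and Hausdorff representations, using the uniqueness statements of Proposition~\ref{prop:witness} to identify the witnesses.

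\emph{Item (i).} Linearity of \(\WL\) is immediate from linearity of the Laplace integral together with uniqueness in \(\mathcal M_L\).

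\begin{itemize}[leftmargin=2.3em]
\item \emph{Products.} By Fubini,
\[
f(x)h(x)=\iint e^{-x(s+u)}\,\mu(ds)\,\nu(du)=\int e^{-xt}\,(\mu*\nu)(dt).
\]
Absolute convergence holds because \(|\mu|*|\nu|\) has Laplace transform \(\Lap|\mu|\cdot\Lap|\nu|<\infty\). Hence \(\mu*\nu\in\mathcal M_L\), and uniqueness identifies it as \(\WL[fh]\).
\item \emph{Shift.} Use \(e^{-(x+c)t}=e^{-xt}e^{-ct}\).
\item \emph{Dilation.} Use \(e^{-\lambda xt}=e^{-x(\lambda t)}\) and change variables under the pushforward.
\end{itemize}

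Positivity is preserved by convolution, positive combinations and pushforwards. Closure under positive mixtures follows by integrating witnesses with respect to the mixing measure, again via Fubini and Bernstein--Widder. For \(f\circ g\) with \(f=\Lap\{\mu\}\), \(\mu\ge0\), and \(g\in\BF\), write
\[
f(g(x))=\int e^{-tg(x)}\,\mu(dt),
\]
so it suffices to show \(e^{-tg}\in\CM\) for each \(t\ge0\). By Fa\`a di Bruno, or by induction on \((e^{-tg})'=-tg'e^{-tg}\), each derivative \((-1)^n(e^{-tg})^{(n)}\) is a positive combination of products of factors \((-1)^{k-1}g^{(k)}\ge0\) times \(e^{-tg}\). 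The induction uses that the products of \(\CM\) factors \(-g'\cdot\)(previous terms) keep the sign pattern. The reciprocal statement is the special case \(f(y)=1/y=\int_0^\infty e^{-yt}\,dt\), with \(g>0\) ensuring finiteness.

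\emph{Item (ii).} Insert \(\phi(t)=\int e^{-ts}\,\mu(ds)\) into \(F\) and apply Tonelli (legitimate since \(\mu\ge0\)):
\[
F(x)=\int \int_0^\infty e^{-(x+s)t}\,dt\,\mu(ds)=\int \frac{\mu(ds)}{x+s}.
\]
The atom of \(\mu\) at \(0\) gives the \(a/x\) term. The finiteness of \(F(1)\) gives \(\int (1+s)^{-1}\,\mu(ds)<\infty\), so this is exactly the form \eqref{eq:stieltjes-representation} with \(b=0\).

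\emph{Item (iii).} The reciprocal equivalence and the Pick characterization of \(\CBF\) are quoted from \cite[Chs.~6--7]{schilling-song-vondracek}, as in Proposition~\ref{prop:witness}(iii). For the inclusion \(\St\subset\CM\), write \(1/(x+t)=\int_0^\infty e^{-xu}e^{-tu}\,du\) for \(t\ge0\), and note that the constant \(b=\Lap\{b\delta_0\}\). Then
\[
f=\Lap\{b\delta_0+\phi(u)\,du\},\qquad \phi(u)=a+\int e^{-tu}\,\sigma(dt)\ge0,
\]
so \(f\in\CM\) by (i).

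\emph{Item (iv).} Compute
\[
f(n+\lambda)=\int e^{-nt}e^{-\lambda t}\,\mu(dt)=\int s^n\,d\rho(s),
\]
where \(\rho\) is the pushforward of \(e^{-\lambda t}\mu(dt)\) under \(s=e^{-t}\). The measure \(\rho\) is finite signed on \([0,1]\) by \eqref{eq:laplace-admissibility} at \(x=\lambda\), so uniqueness in Proposition~\ref{prop:witness}(ii) identifies it as \(\WH[a]\).

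The main obstacle is the composition claim \(f\circ g\in\CM\), specifically setting up the induction for \(e^{-tg}\in\CM\) cleanly. I would do this by proving the lemma that \(h\in\CM\) and \(g'\in\CM\) imply \(-(h\circ g)'=(-h')\circ g\cdot g'\), and then inducting on the derivative order over all \(\CM\) functions simultaneously.
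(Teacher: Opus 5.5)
Your proposal is correct and follows the paper's route. The transport identities, the Stieltjes lift in (ii) and the sampling formula in (iv) are the same Fubini/Tonelli and change-of-variables computations, and the reciprocal and Pick characterizations in (iii) are quoted from Schilling--Song--Vondra\v{c}ek just as in the paper. The one difference is that you write out two standard arguments the paper delegates to that reference: the Fa\`a di Bruno/Leibniz induction for $f\circ g\in\CM$, and the explicit positive Laplace datum $b\delta_0+\bigl(a+\int e^{-tu}\sigma(dt)\bigr)du$ that gives $\St\subset\CM$. In that induction the completely monotone factor is $g'$, not $-g'$.
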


\begin{proof}
The identities in \textup{(i)} follow by linearity, convolution, and change
of variables; the closure, composition, and reciprocal statements are
standard \cite[Chapters~1 and~3]{schilling-song-vondracek}. For
\textup{(ii)}, nonnegative integrands permit reversal of integration and give
\eqref{eq:stieltjes-lift}. Part \textup{(iii)} follows from the reciprocal and
Pick characterizations
\cite[Theorems~6.7 and~7.3]{schilling-song-vondracek}. Finally,
\begin{equation}
a_n=\int_{[0,\infty)}(e^{-t})^n e^{-\lambda t}\,d\mu(t),
\end{equation}
which proves \textup{(iv)}.
\end{proof}

\begin{lemma}[Local sign certificates]\label{lem:one-point}
\begin{enumerate}[label=\textup{(\roman*)},leftmargin=2.3em]
\item A completely monotone function is nonnegative, nonincreasing, and
convex. A certified point where \(f<0\), \(f'>0\), or \(f''<0\) excludes
\(f\in\CM\); a violation of complete monotonicity by \(g'\) excludes
\(g\in\BF\).
\item If \(f\ge0\) is nonincreasing and \(f(0^+)=0\), then \(f\equiv0\).
Hence a nonzero completely monotone function cannot vanish at \(0^+\).
\end{enumerate}
\end{lemma}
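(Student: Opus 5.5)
Both parts follow directly from Definition~\ref{def:classes}(i); Proposition~\ref{prop:witness} is not needed. For part~(i), I will read the three sign conditions off the cases \(n=0,1,2\) of \((-1)^nf^{(n)}\ge0\). The case \(n=0\) gives \(f\ge0\). The case \(n=1\) gives \(f'\le0\) on \((0,\infty)\), so \(f\) is nonincreasing by the mean value theorem. The case \(n=2\) gives \(f''\ge0\), so \(f\) is convex. Hence a certified point with \(f<0\), \(f'>0\), or \(f''<0\) contradicts \(n=0\), \(1\), or \(2\) respectively, and \(f\notin\CM\).

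For the Bernstein clause of~(i), I use that \(g\in\BF\) means \(g\in C^\infty\) with \(g'\in\CM\). A certified violation of complete monotonicity by \(g'\), for example a point where \(g'<0\), \(g''>0\), or \(g'''<0\), shows \(g'\notin\CM\) and therefore \(g\notin\BF\). One could also argue through the dictionary \eqref{eq:witness-dictionary}, but the definitional route is enough.

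For part~(ii), let \(f\ge0\) be nonincreasing with \(f(0^+)=0\). For every \(x>0\) and every \(0<y<x\), monotonicity gives \(0\le f(x)\le f(y)\). Letting \(y\downarrow0\) gives \(0\le f(x)\le f(0^+)=0\), so \(f\equiv0\) on \((0,\infty)\). The corollary then follows from part~(i): a completely monotone \(f\) is nonnegative and nonincreasing. So if \(f(0^+)=0\), then \(f\equiv0\), and by contraposition a nonzero \(f\in\CM\) cannot vanish at \(0^+\).

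There is no real obstacle here. The only point to state carefully is that \(f(0^+)\) exists in \([0,\infty]\) because \(f\) is monotone, and that the hypothesis is exactly that this limit equals \(0\). As a cross-check in the witness language, \(f(0^+)=\mu([0,\infty))\) by monotone convergence when \(\WL[f]=\mu\ge0\), so \(f(0^+)=0\) forces \(\mu=0\); the elementary argument above does not rely on this.
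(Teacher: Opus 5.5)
Your proposal is correct and matches the paper's proof: part (i) is read off from the defining signs for $n=0,1,2$ (together with $g'\in\CM$ for the Bernstein clause), and part (ii) uses the same squeeze $0\le f(x)\le f(y)\to 0$ as $y\downarrow 0$. Your added remarks, namely that $f(0^+)$ exists by monotonicity and that $f(0^+)=\mu([0,\infty))$ gives a witness-language cross-check, are accurate but not needed.
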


\begin{proof}
Part \textup{(i)} restates the defining signs. For \textup{(ii)}, if
\(0<y<x\), then \(f(x)\le f(y)\to0\) as \(y\downarrow0\).
\end{proof}

\begin{lemma}[Bernstein increments are Hausdorff]\label{lem:bf-increments}
Let \(F\in\BF\), set \(F(0)=F(0^+)\), and define
\(d_n=F(n+1)-F(n)\). If \((a,b,\nu)\) is the triplet in
\eqref{eq:levy-khintchine}, then
\begin{equation}\label{eq:bf-increment-witness}
\WH[d]
=b\,\delta_1
+(t\mapsto e^{-t})_\#\bigl((1-e^{-t})\nu(dt)\bigr).
\end{equation}
In particular, \((d_n)_{n\ge0}\) is a Hausdorff moment sequence.
\end{lemma}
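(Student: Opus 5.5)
The plan is to compute the increments directly from the L\'evy--Khintchine triplet \eqref{eq:levy-khintchine} and rewrite the result as a moment sequence on \([0,1]\). For \(n\ge0\),
\begin{equation*}
d_n=F(n+1)-F(n)=b+\int_{(0,\infty)}\bigl(e^{-nt}-e^{-(n+1)t}\bigr)\,\nu(dt)
=b+\int_{(0,\infty)}e^{-nt}(1-e^{-t})\,\nu(dt).
\end{equation*}
The constant \(a\) cancels. At \(n=0\) the convention \(F(0)=F(0^+)=a\) agrees with the formula, because \(1-e^{-0\cdot t}=0\) and monotone convergence as \(x\downarrow0\) gives \(F(0^+)=a\). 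The linear term contributes \(b=\int s^n\,d(b\delta_1)(s)\) for every \(n\).

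Next, push the measure \((1-e^{-t})\nu(dt)\) on \((0,\infty)\) forward under \(s=e^{-t}\), which maps \((0,\infty)\) into \((0,1)\). Then \(e^{-nt}=s^n\), and the change-of-variables formula gives
\begin{equation*}
\int_{(0,\infty)}e^{-nt}(1-e^{-t})\,\nu(dt)=\int_{[0,1]}s^n\,d\bigl[(t\mapsto e^{-t})_\#((1-e^{-t})\nu)\bigr](s).
\end{equation*}
Adding the atom \(b\delta_1\) yields a representation \(d_n=\int s^n\,d\rho(s)\), with \(\rho\) the right-hand side of \eqref{eq:bf-increment-witness}.

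The step needing care is that \(\rho\) is a \emph{finite} measure, which is required for Proposition~\ref{prop:witness}(ii) to apply. Its total mass is \(b+\int(1-e^{-t})\,\nu(dt)\). This is finite because \(1-e^{-t}\le 1\wedge t\) and \(\int(1\wedge t)\,\nu(dt)<\infty\). The same bound justifies splitting the integral above, since each piece converges absolutely. Once finiteness holds, uniqueness in Proposition~\ref{prop:witness}(ii) identifies \(\WH[d]=\rho\). Since \(\rho\ge0\), Hausdorff's theorem shows that \((d_n)\) is a Hausdorff moment sequence. Equivalently, \(\Delta^m d_n=\int s^n(1-s)^m\,d\rho\ge0\).

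Alternatively, one could derive the result from Proposition~\ref{prop:transport}(iv) applied to \(g'\) with the witness \eqref{eq:bf-witness-versus-levy}, since \(d_n=\int_n^{n+1}g'\). The direct computation above avoids the integration in \(x\), and I would present it that way.
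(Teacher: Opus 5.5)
Your proposal is correct and follows the paper's proof: you compute \(d_n=b+\int_{(0,\infty)}e^{-nt}(1-e^{-t})\,\nu(dt)\) from the L\'evy--Khintchine triplet, push forward under \(t\mapsto e^{-t}\), and get finiteness from the bound \(1-e^{-t}\le 1\wedge t\). Your extra checks, that \(F(0^+)=a\) and that uniqueness in Proposition~\ref{prop:witness}(ii) identifies \(\WH[d]\), only spell out steps the paper leaves implicit.
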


\begin{proof}
The L\'evy--Khintchine representation gives
\begin{equation}
d_n=b+\int_{(0,\infty)}e^{-nt}(1-e^{-t})\,\nu(dt).
\end{equation}
The measure in \eqref{eq:bf-increment-witness} is finite because
\(1-e^{-t}\asymp1\wedge t\), and its moments are \((d_n)\).
\end{proof}

\section{The digamma bridge: one kernel, four classifications}
\label{sec:bridge}

For \(\alpha>0\) define, on \(x>0\),
\begin{equation}\label{eq:H-def}
H_\alpha(x)=\psi(x+\alpha)-\psi(x)-\frac{\alpha}{x},
\qquad
F_\alpha(x)=\frac{\Gamma(x+\alpha)}{x^\alpha\,\Gamma(x)},
\end{equation}
where \(\psi=\Gamma'/\Gamma\), so that \(H_\alpha=(\log F_\alpha)'\). The
quotient \(F_\alpha\) is classical: Wendel \cite{wendel} proved
\(F_\alpha(x)\to1\) as \(x\to\infty\), the associated two-sided bounds are
the Gautschi--Kershaw inequalities \cite{gautschi,kershaw}, and the
extensive literature on ratios of gamma functions is surveyed in
\cite{qi-ratio}. Simon \cite{simon-moment} raised the question whether
\(F_\alpha\) is a Bernstein function for \(0<\alpha<1\). Independently,
Szab\'o \cite{szabo-cm} asked for the exact cutoff \(\alpha_0\) such that
\(y^aH_d(y)\) is strictly completely monotone precisely for
\(a\le\alpha_0\), given \(0<d<1\). These questions and the sign-adjusted
extension to \(d>1\) are governed by the following expansion.

\begin{theorem}[Triple-product expansion of the digamma bridge]
\label{thm:identity}
For every \(\alpha>0\) and \(x>0\),
\begin{equation}\label{eq:identity}
H_\alpha(x)
=\alpha(1-\alpha)\sum_{k=0}^{\infty}
\frac{1}{(x+k)(x+k+1)(x+k+\alpha)} ,
\end{equation}
with locally uniform absolute convergence of the series and of all its
termwise derivatives. Consequently:
\begin{enumerate}[label=\textup{(\roman*)},leftmargin=2.3em]
\item for \(0<\alpha<1\), \(H_\alpha\) is strictly completely monotone;
\item \(H_1\equiv0\), and for \(\alpha>1\), \(-H_\alpha\) is strictly
completely monotone; in particular \(H_\alpha<0\) on \((0,\infty)\);
\item for every \(\alpha>0\), \(H_\alpha(x)=(1-\alpha)/x+O(1)\) as
\(x\downarrow0\).
\end{enumerate}
\end{theorem}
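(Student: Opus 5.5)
The plan is to prove \eqref{eq:identity} by telescoping the two standard partial-fraction series, and then read off (i)--(iii) from the Laplace witness dictionary of Proposition~\ref{prop:witness} and the transport laws of Proposition~\ref{prop:transport}.

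\textbf{Step 1 (the identity).} I will start from the classical series
\[
\psi(x+\alpha)-\psi(x)=\sum_{k\ge0}\Bigl(\frac1{x+k}-\frac1{x+k+\alpha}\Bigr)=\sum_{k\ge0}\frac{\alpha}{(x+k)(x+k+\alpha)},
\]
valid for \(x>0\) and \(\alpha>0\). Next I write \(\alpha/x\) as the telescoping series
\[
\frac{\alpha}{x}=\alpha\sum_{k\ge0}\Bigl(\frac1{x+k}-\frac1{x+k+1}\Bigr)=\sum_{k\ge0}\frac{\alpha}{(x+k)(x+k+1)}.
\]
Subtracting term by term is legitimate because both series converge absolutely. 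The \(k\)-th term becomes
\[
\frac{\alpha}{x+k}\Bigl(\frac1{x+k+\alpha}-\frac1{x+k+1}\Bigr)=\frac{\alpha(1-\alpha)}{(x+k)(x+k+1)(x+k+\alpha)}.
\]
For convergence, note that on a compact set \(x\in[\delta,R]\subset(0,\infty)\) the \(k\)-th term and its \(n\)-th derivative are \(O_{n,\delta}(k^{-3-n})\), uniformly in \(x\). The Weierstrass test then gives locally uniform absolute convergence of the series and of every termwise derivative, so \(H_\alpha\) may be differentiated termwise.

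\textbf{Step 2 (signs, parts (i) and (ii)).} Each factor \(1/(x+c)\) with \(c\ge0\) has Laplace witness \(e^{-ct}\,dt\). By the product rule \(\WL[fh]=\mu*\nu\) in Proposition~\ref{prop:transport}(i), each triple product has as witness a convolution of three positive exponential densities. This convolution is a strictly positive density on \((0,\infty)\).

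The \(n\)-th derivative of each term, multiplied by \((-1)^n\), is therefore strictly positive on \((0,\infty)\). Summing via Step 1, \((-1)^nH_\alpha^{(n)}\) is a positive series of strictly positive terms multiplied by \(\alpha(1-\alpha)\).
\begin{itemize}
\item For \(0<\alpha<1\) the prefactor is positive, which gives strict complete monotonicity of \(H_\alpha\).
\item For \(\alpha=1\) the prefactor vanishes, so \(H_1\equiv0\). This also follows directly from \(\psi(x+1)=\psi(x)+1/x\).
\item For \(\alpha>1\) the prefactor is negative, so \(-H_\alpha\) is strictly completely monotone, and in particular \(H_\alpha<0\).
\end{itemize}
If one wants the witness itself, it is the monotone sum of the termwise witnesses. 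This sum is a positive measure by monotone convergence, and its Laplace transform is finite for every \(x>0\) because the series converges.

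\textbf{Step 3 (part (iii)).} I split off the \(k=0\) term:
\[
\frac{\alpha(1-\alpha)}{x(x+1)(x+\alpha)}=\frac{1-\alpha}{x}+\frac{1-\alpha}{x}\Bigl(\frac{\alpha}{(x+1)(x+\alpha)}-1\Bigr).
\]
The bracket is a rational function that vanishes at \(x=0\), so it is \(O(x)\) and the second summand is \(O(1)\) as \(x\downarrow0\). The tail \(\sum_{k\ge1}\) is dominated by \(\sum_{k\ge1}k^{-3}\) times a constant, uniformly for \(x\in[0,1]\), so it is also \(O(1)\). Together these give \(H_\alpha(x)=(1-\alpha)/x+O(1)\).

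\textbf{Main obstacle.} There is no serious obstacle; the delicate points are bookkeeping. One must justify the termwise subtraction and differentiation (handled by the \(k^{-3-n}\) bounds), and one must make sure that "strict" complete monotonicity is obtained. Strictness comes from each term having a nonzero positive witness with full support, not merely from nonnegativity of the sum.
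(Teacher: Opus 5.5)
Your proposal is correct and follows the paper's proof essentially step for step. It uses the same telescoping of the digamma series against $\alpha/x$, the same $O(k^{-3-n})$ domination for termwise differentiation, the sign read-off from the prefactor $\alpha(1-\alpha)$, and the same split of the $k=0$ term for (iii); the only difference is that you certify strictness explicitly through each summand's Laplace witness (a convolution of three exponential densities, strictly positive on $(0,\infty)$), where the paper simply asserts that products of the functions $(x+c)^{-1}$ are strictly completely monotone.
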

\begin{proof}
The series representation of the digamma function
\cite[Chapter~XII]{whittaker-watson} gives, for \(\alpha,x>0\),
\begin{equation}
\psi(x+\alpha)-\psi(x)
=\sum_{k=0}^{\infty}\left(\frac{1}{x+k}-\frac{1}{x+k+\alpha}\right)
=\alpha\sum_{k=0}^{\infty}\frac{1}{(x+k)(x+k+\alpha)} .
\end{equation}
The elementary telescoping identity
\begin{equation}
\frac1x=\sum_{k=0}^{\infty}\left(\frac{1}{x+k}-\frac{1}{x+k+1}\right)
=\sum_{k=0}^{\infty}\frac{1}{(x+k)(x+k+1)}
\end{equation}
expresses the subtracted term over the same index set. Therefore
\begin{equation}
\begin{aligned}
H_\alpha(x)
&=\alpha\sum_{k=0}^{\infty}
\frac{(x+k+1)-(x+k+\alpha)}
{(x+k)(x+k+1)(x+k+\alpha)}\\
&=\alpha(1-\alpha)\sum_{k=0}^{\infty}
\frac{1}{(x+k)(x+k+1)(x+k+\alpha)} .
\end{aligned}
\end{equation}
The terms are \(O((x+k)^{-3})\), so the series and its termwise derivatives
converge locally uniformly on \((0,\infty)\).

(i)--(ii) Each summand is a product of the completely monotone functions
\((x+c)^{-1}\), hence completely monotone, and strictly so; locally uniform
sums preserve the derivative-sign inequalities. The prefactor
\(\alpha(1-\alpha)\) is positive for \(0<\alpha<1\), zero for \(\alpha=1\)
(consistently with \(\psi(x+1)=\psi(x)+1/x\)), and negative for
\(\alpha>1\).

(iii) The \(k=0\) term is \(1/(x(x+1)(x+\alpha))=1/(\alpha x)+O(1)\), while
the sum over \(k\ge1\) converges to a finite limit as \(x\downarrow0\).
\end{proof}

\subsection{Sharp ranges for Simon's quotient and the reciprocal ratio}

Simon \cite{simon-moment} observed that logarithmic complete monotonicity of
\(1/F_\alpha\) alone does not imply the Bernstein property of \(F_\alpha\)
and left that question open. A complement representation proves sufficiency;
its witness is the measure \(\mu_\alpha\) below.

\begin{proposition}[Complement Laplace measure]\label{prop:complement}
Let \(0<\alpha<1\). Then
\begin{equation}\label{eq:complement}
1-F_\alpha(x)
=\frac{1}{\Gamma(\alpha)\Gamma(1-\alpha)}
\int_0^\infty e^{-xt}\,\mu_\alpha(dt),
\end{equation}
where \(\mu_\alpha=-dJ_\alpha\) is the positive finite measure obtained from
the decreasing function
\begin{equation}\label{eq:J-def}
J_\alpha(t)=\int_0^t(t-u)^{\alpha-1}(e^u-1)^{-\alpha}\,du
=\int_0^1(1-v)^{\alpha-1}\left(\frac{t}{e^{tv}-1}\right)^{\alpha}dv .
\end{equation}
In particular \(1-F_\alpha\in\CM\), \(F_\alpha'\in\CM\), and
\(F_\alpha\in\BF\). Since
\(\mu_\alpha((0,\infty))=\Gamma(\alpha)\Gamma(1-\alpha)\), the equivalent
L\'evy representation is
\begin{equation}\label{eq:simon-levy}
F_\alpha(x)
=\frac{1}{\Gamma(\alpha)\Gamma(1-\alpha)}
\int_{(0,\infty)}(1-e^{-xt})\,\mu_\alpha(dt).
\end{equation}
\end{proposition}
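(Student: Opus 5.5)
The plan is to compute the Laplace transform of \(J_\alpha\) directly, integrate by parts to pass to \(\mu_\alpha=-dJ_\alpha\), and read off \eqref{eq:complement}. The membership statements then follow from the witness dictionary in Proposition~\ref{prop:witness}.

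\emph{Properties of \(J_\alpha\).} First, the substitution \(u=tv\) turns the first expression in \eqref{eq:J-def} into the second, since \((t-tv)^{\alpha-1}(e^{tv}-1)^{-\alpha}\,t\,dv=(1-v)^{\alpha-1}\bigl(t/(e^{tv}-1)\bigr)^{\alpha}dv\). The second form shows that \(J_\alpha\) is decreasing, because \(t\mapsto t/(e^{tv}-1)\) is decreasing for each \(v>0\). For both limits, the integrand is dominated by \((1-v)^{\alpha-1}v^{-\alpha}\), using \(t/(e^{tv}-1)\le 1/v\); this majorant is integrable for \(0<\alpha<1\), so dominated convergence applies.
\begin{itemize}
\item As \(t\downarrow0\) we have \(t/(e^{tv}-1)\to1/v\), hence \(J_\alpha(0^+)=\int_0^1(1-v)^{\alpha-1}v^{-\alpha}dv=B(\alpha,1-\alpha)=\Gamma(\alpha)\Gamma(1-\alpha)\).
\item As \(t\to\infty\) the integrand tends to \(0\), hence \(J_\alpha(\infty)=0\).
\end{itemize}
So \(\mu_\alpha=-dJ_\alpha\) is a positive measure on \((0,\infty)\) of total mass \(\Gamma(\alpha)\Gamma(1-\alpha)\).

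\emph{Laplace transform of \(J_\alpha\).} The first form of \(J_\alpha\) is the convolution of \(t^{\alpha-1}\) with \((e^u-1)^{-\alpha}\). Both are nonnegative and have finite Laplace transforms for \(x>0\); near \(u=0\), \((e^u-1)^{-\alpha}\sim u^{-\alpha}\) is integrable. Tonelli therefore gives
\[
\int_0^\infty e^{-xt}J_\alpha(t)\,dt=\Gamma(\alpha)x^{-\alpha}\int_0^\infty e^{-xu}(e^u-1)^{-\alpha}du .
\]
Substituting \(s=e^{-u}\) turns the last integral into \(\int_0^1 s^{x+\alpha-1}(1-s)^{-\alpha}ds=B(x+\alpha,1-\alpha)=\Gamma(x+\alpha)\Gamma(1-\alpha)/\Gamma(x+1)\). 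Using \(\Gamma(x+1)=x\Gamma(x)\), this yields
\[
\int_0^\infty e^{-xt}J_\alpha(t)\,dt=\Gamma(\alpha)\Gamma(1-\alpha)\,\frac{F_\alpha(x)}{x}.
\]

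\emph{Integration by parts.} Since \(J_\alpha\) is bounded and monotone, the Stieltjes integration by parts has vanishing boundary term at infinity. It gives
\[
\int_{(0,\infty)}e^{-xt}\mu_\alpha(dt)=J_\alpha(0^+)-x\int_0^\infty e^{-xt}J_\alpha(t)\,dt=\Gamma(\alpha)\Gamma(1-\alpha)\bigl(1-F_\alpha(x)\bigr),
\]
which is \eqref{eq:complement}. I expect the main obstacle to be this step together with the justification of Fubini and the boundary limits of \(J_\alpha\). The singularities \(u^{-\alpha}\) and \((t-u)^{\alpha-1}\) are mild, and monotonicity plus boundedness of \(J_\alpha\) should handle the rest.

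\emph{Consequences.} By Proposition~\ref{prop:witness}(i), the positivity of \(\mu_\alpha\) gives \(1-F_\alpha\in\CM\). Then \(F_\alpha'=-(1-F_\alpha)'\) is the Laplace transform of \(t\,\mu_\alpha(dt)/(\Gamma(\alpha)\Gamma(1-\alpha))\ge0\), so \(F_\alpha'\in\CM\). Since \(F_\alpha>0\), the dictionary \eqref{eq:witness-dictionary} gives \(F_\alpha\in\BF\). Finally, we write \(1=\mu_\alpha((0,\infty))/(\Gamma(\alpha)\Gamma(1-\alpha))\) and subtract \eqref{eq:complement} from it, which gives \eqref{eq:simon-levy}. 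This is the Lévy–Khintchine form \eqref{eq:levy-khintchine} with \(a=b=0\) and Lévy measure \(\mu_\alpha/(\Gamma(\alpha)\Gamma(1-\alpha))\); that measure is finite, so \(\int(1\wedge t)\,d\mu_\alpha<\infty\).
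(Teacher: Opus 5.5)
Your proposal is correct and follows essentially the same route as the paper. The beta integral with $s=e^{-u}$ and $x^{-\alpha}=\Lap\{t^{\alpha-1}/\Gamma(\alpha)\}(x)$ give $\int_0^\infty e^{-xt}J_\alpha(t)\,dt=\Gamma(\alpha)\Gamma(1-\alpha)F_\alpha(x)/x$. Dominated convergence with the majorant $(1-v)^{\alpha-1}v^{-\alpha}$ gives the boundary values of $J_\alpha$, and integration by parts against $\mu_\alpha=-dJ_\alpha$ yields \eqref{eq:complement}. The only difference is the order of presentation: you compute the Laplace transform of $J_\alpha$ first, while the paper starts from the beta-integral formula for $\Gamma(x+\alpha)/\Gamma(x+1)$. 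Your justifications of Tonelli, the boundary terms, and the L\'evy form are sound.
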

\begin{proof}
Put \(\beta=1-\alpha\). The beta integral
\cite[Eq.~5.12.1]{dlmf} and the substitution \(t=e^{-u}\) give
\begin{equation}
\frac{\Gamma(x+\alpha)}{\Gamma(x+1)}
=\frac{B(x+\alpha,\beta)}{\Gamma(\beta)}
=\frac{1}{\Gamma(\beta)}\int_0^\infty e^{-xu}\,(e^u-1)^{-\alpha}\,du ,
\end{equation}
hence, using \(F_\alpha(x)=x^{\beta}\,\Gamma(x+\alpha)/\Gamma(x+1)\) and
\(x^{-\alpha}=\Lap\{t^{\alpha-1}/\Gamma(\alpha)\}(x)\),
\begin{equation}
F_\alpha(x)
=\frac{x}{\Gamma(\alpha)\Gamma(1-\alpha)}
\int_0^\infty e^{-xt}J_\alpha(t)\,dt ,
\end{equation}
where \(J_\alpha\) is the Laplace convolution in \eqref{eq:J-def}; the second
form follows from \(u=tv\). For fixed \(v\in(0,1)\), the map
\(t\mapsto t/(e^{tv}-1)\) is positive and decreasing, so \(J_\alpha\) is
decreasing; the integrand is dominated by \((1-v)^{\alpha-1}v^{-\alpha}\),
integrable for \(0<\alpha<1\), so by dominated convergence
\begin{equation}
J_\alpha(0^+)=B(\alpha,1-\alpha)=\Gamma(\alpha)\Gamma(1-\alpha),
\qquad J_\alpha(\infty)=0 .
\end{equation}
Let \(\mu_\alpha=-dJ_\alpha\ge0\), a finite measure of total mass
\(\Gamma(\alpha)\Gamma(1-\alpha)\). Integration by parts gives
\begin{equation}
x\int_0^\infty e^{-xt}J_\alpha(t)\,dt
=J_\alpha(0^+)-\int_0^\infty e^{-xt}\,\mu_\alpha(dt),
\end{equation}
which is \eqref{eq:complement}; differentiating it gives
\(F_\alpha'=\Lap\{t\,\mu_\alpha\}/(\Gamma(\alpha)\Gamma(1-\alpha))\in\CM\),
and \(F_\alpha\ge0\) gives \(F_\alpha\in\BF\).
\end{proof}

\begin{theorem}[Sharp Bernstein range of Simon's quotient]
\label{thm:simon-sharp}
For \(\alpha\ge0\), \(F_\alpha\) is a Bernstein function on
\((0,\infty)\) if and only if \(0\le\alpha\le1\).
\end{theorem}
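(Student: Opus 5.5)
Sufficiency is essentially in hand. For \(0<\alpha<1\), Proposition~\ref{prop:complement} already gives \(F_\alpha\in\BF\) with the explicit L\'evy measure \(\mu_\alpha/(\Gamma(\alpha)\Gamma(1-\alpha))\). The endpoints are trivial: \(F_0\equiv1\) is constant, and \(F_1(x)=\Gamma(x+1)/(x\Gamma(x))\equiv1\) as well. Both are Bernstein functions with triplet \((1,0,0)\). So the substance is necessity: for \(\alpha>1\) I must exhibit an explicit violation in the spirit of Lemma~\ref{lem:one-point}.

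For \(\alpha>1\) I plan to use Theorem~\ref{thm:identity}(ii), which says \(H_\alpha=(\log F_\alpha)'<0\) on \((0,\infty)\). Since \(F_\alpha>0\), this gives \(F_\alpha'=F_\alpha H_\alpha<0\) everywhere. A Bernstein function has \(g'\in\CM\), so \(g'\ge0\). Thus a single point where \(F_\alpha'<0\) already excludes \(F_\alpha\in\BF\), by Lemma~\ref{lem:one-point}(i) applied to \(g'\). The witness certificate is then simply a sign reversal: \(\WL[F_\alpha']\) cannot be a positive measure, because its Laplace transform is strictly negative.

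For completeness I would add a cross-check via boundary behaviour. Theorem~\ref{thm:identity}(iii) gives \(H_\alpha(x)=(1-\alpha)/x+O(1)\), so \(F_\alpha(x)\asymp x^{1-\alpha}\to\infty\) as \(x\downarrow0\). This contradicts the finite boundary value \(g(0^+)=a\) that every Bernstein function has, since it is nondecreasing and nonnegative. The check is consistent with \(F_\alpha(x)=\Gamma(x+\alpha)/(x^{\alpha-1}\Gamma(x+1))\). Either argument alone suffices, and I would present the derivative-sign one as the main certificate.

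The only step needing care is the first paragraph's claim that \(\alpha\in\{0,1\}\) fits the theorem's convention, and that nonnegativity holds, which is clear since \(F_\alpha>0\). There is no real obstacle. The nontrivial content, namely positivity of \(\mu_\alpha\) for \(0<\alpha<1\) and the sign of \(H_\alpha\) for \(\alpha>1\), has already been established in Proposition~\ref{prop:complement} and Theorem~\ref{thm:identity}. The proof therefore assembles these with the one-point exclusion.
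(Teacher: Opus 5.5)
Your proposal is correct and follows the paper's proof essentially verbatim: the endpoints give \(F_\alpha\equiv1\), Proposition~\ref{prop:complement} handles \(0<\alpha<1\), and for \(\alpha>1\) Theorem~\ref{thm:identity}(ii) gives \(F_\alpha'=H_\alpha F_\alpha<0\), which contradicts the fact that every Bernstein function is nondecreasing. Your boundary cross-check, \(F_\alpha(x)\sim\Gamma(\alpha)\,x^{1-\alpha}\to\infty\) as \(x\downarrow0\) (contradicting the finite value \(g(0^+)\)), is also valid but not needed.
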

\begin{proof}
For \(\alpha\in\{0,1\}\), \(F_\alpha\equiv1\). For \(0<\alpha<1\),
Proposition~\ref{prop:complement} applies. For \(\alpha>1\),
Theorem~\ref{thm:identity}(ii) gives
\(F_\alpha'(x)=H_\alpha(x)F_\alpha(x)<0\) for all \(x>0\), so \(F_\alpha\)
is strictly decreasing. Every Bernstein function is nondecreasing because
\(F'\in\CM\).
\end{proof}

Simon already records the complete-Bernstein exclusion by an appeal to the
general theory \cite[p.~591]{simon-moment}. The next theorem supplies a direct
Pick-sign certificate on the cut.

\begin{theorem}[Boundary-phase certificate for Simon's quotient]
\label{thm:simon-cbf}
For every \(0<\alpha<1\), \(F_\alpha\in\BF\setminus\CBF\). Equivalently,
the L\'evy measure in \eqref{eq:simon-levy} is not absolutely continuous with
a completely monotone density.
\end{theorem}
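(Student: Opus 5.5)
The plan is to exclude \(F_\alpha\) from \(\CBF\) through the Pick characterization in Proposition~\ref{prop:transport}(iii). A complete Bernstein function extends holomorphically to \(\mathbb C\setminus(-\infty,0]\) with \(\Imag g(z)\ge0\) for \(\Imag z>0\). So it suffices to find one point \(z\) in the upper half-plane with \(\Imag F_\alpha(z)<0\). This is a certificate in the sense of Remark~\ref{rem:calculus}: a single wrong-sign value suffices, and the Stieltjes measure of \(1/F_\alpha\) need not be computed. Membership \(F_\alpha\in\BF\) is already given by Theorem~\ref{thm:simon-sharp}. The reformulation in terms of the L\'evy measure in \eqref{eq:simon-levy} then follows from the definition of \(\CBF\) and the uniqueness of the triplet in Proposition~\ref{prop:witness}.

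\emph{Step 1 (extension).} I use the principal branch of \(z^\alpha\) and write
\[
F_\alpha(z)=\frac{\Gamma(z+\alpha)}{z^\alpha\Gamma(z)} .
\]
This is holomorphic on \(\mathbb C\setminus(-\infty,0]\): the poles of \(\Gamma(z+\alpha)\) lie at \(-\alpha-k\), and \(1/\Gamma\) is entire. This holomorphic function is the unique extension, so it is the function to which the Pick test applies.

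\emph{Step 2 (boundary phase).} Fix \(t\in(\alpha,1)\). The ratio \(\Gamma(z+\alpha)/\Gamma(z)\) is holomorphic near \(z=-t\), since \(-t+\alpha\in(\alpha-1,0)\) is not a pole and \(-t\) is not a pole. It takes the real value \(\Gamma(\alpha-t)/\Gamma(-t)\) at \(z=-t\). From the upper half-plane, \(z^\alpha\to t^\alpha e^{i\pi\alpha}\). Hence
\[
\lim_{\varepsilon\downarrow0}F_\alpha(-t+i\varepsilon)=R_\alpha(t)\,e^{-i\pi\alpha},
\qquad
R_\alpha(t)=\frac{\Gamma(\alpha-t)}{t^\alpha\Gamma(-t)} .
\]
The imaginary part of this limit is \(-R_\alpha(t)\sin(\pi\alpha)\).

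\emph{Step 3 (sign).} For \(t\in(\alpha,1)\), both \(\alpha-t\) and \(-t\) lie in \((-1,0)\), where \(\Gamma<0\). Therefore \(R_\alpha(t)>0\). Since \(\sin(\pi\alpha)>0\) for \(0<\alpha<1\), the boundary value has strictly negative imaginary part. By continuity of \(F_\alpha\) up to the cut at \(-t\) from above, \(\Imag F_\alpha(-t+i\varepsilon)<0\) for all small \(\varepsilon>0\). This contradicts the Pick condition, so \(F_\alpha\notin\CBF\).

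As a consistency check, on \(t\in(0,\alpha)\) one has \(R_\alpha<0\), so the sign there is the allowed one. The obstruction sits exactly on the window \((\alpha,1)\), between the first zero of \(1/\Gamma(z)\) and the first pole of \(\Gamma(z+\alpha)\).

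The main obstacle is ensuring that the boundary limit legitimately transfers to an interior point. This requires the ratio \(\Gamma(z+\alpha)/\Gamma(z)\) to be genuinely holomorphic in a full neighborhood of \(-t\), so that only the branch of \(z^\alpha\) produces the phase. The choice \(t\in(\alpha,1)\) is made precisely to guarantee this.
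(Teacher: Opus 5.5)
Your proof is correct and follows the paper's argument essentially step for step: you identify the principal-branch formula with the Pick extension by uniqueness, compute the boundary value \(F_\alpha(-t+i0)=\frac{\Gamma(\alpha-t)}{\Gamma(-t)}\,t^{-\alpha}e^{-i\pi\alpha}\) on the window \(t\in(\alpha,1)\), and read off its sign from \(\Gamma<0\) on \((-1,0)\). The one difference is cosmetic and logically equivalent. You push the negative boundary sign to interior points \(-t+i\varepsilon\) by continuity, whereas the paper passes the Pick inequality from the upper half-plane to the boundary limit.
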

\begin{proof}
The formula \(F_\alpha(z)=\Gamma(z+\alpha)/(z^\alpha\Gamma(z))\), with the
principal branch of \(z^\alpha\), extends \(F_\alpha\) holomorphically to
\(\mathbb C\setminus(-\infty,0]\) away from the poles \(z=-\alpha-k\) of
\(\Gamma(z+\alpha)\) on the cut (\(1/\Gamma\) is entire). If
\(F_\alpha\) were a complete Bernstein function, this extension would
coincide with the Pick extension, by uniqueness of analytic continuation
from \((0,\infty)\), and Proposition~\ref{prop:transport}(iii) would force
\(\Imag F_\alpha(z)\ge0\) throughout the open upper half-plane, hence also
for all boundary limits from above.

Fix \(t\in(\alpha,1)\), a nonempty interval, and let
\(z=-t+i\varepsilon\), \(\varepsilon\downarrow0\). The point \(-t\) is not
a pole (\(t\ne\alpha+k\)), and \(\Gamma\) is continuous there, so the limit
exists and equals
\begin{equation}
F_\alpha(-t+i0)
=\frac{\Gamma(\alpha-t)}{\Gamma(-t)}\;t^{-\alpha}\,e^{-i\pi\alpha},
\end{equation}
using \(\log(-t+i0)=\log t+i\pi\). Both arguments lie in \((-1,0)\), where
the Gamma function is strictly negative; hence
\(\Gamma(\alpha-t)/\Gamma(-t)>0\) and
\begin{equation}\label{eq:phase-witness}
\Imag F_\alpha(-t+i0)
=-\,\frac{\Gamma(\alpha-t)}{\Gamma(-t)}\;t^{-\alpha}\sin(\pi\alpha)<0 .
\end{equation}
This contradicts the Pick property, so \(F_\alpha\notin\CBF\). The final
statement follows from Definition~\ref{def:classes}(iv) and
\eqref{eq:simon-levy}.
\end{proof}

Koumandos and Pedersen study the reciprocal ratio below in generalized
Bernstein and generalized Stieltjes classes
\cite{koumandos-pedersen-two-parameter,
koumandos-pedersen-approximations}. The corollary records the ordinary
\(\LCM/\CM\) range delivered directly by the bridge.

\begin{corollary}[Sharp \(\LCM\) range of the reciprocal ratio]
\label{cor:reciprocal}
Let \(\alpha\ge0\) and
\(G_\alpha(x)=x^\alpha\Gamma(x)/\Gamma(x+\alpha)=1/F_\alpha(x)\). Then
\begin{equation}
G_\alpha\in\LCM
\quad\Longleftrightarrow\quad
G_\alpha\in\CM
\quad\Longleftrightarrow\quad
0\le\alpha\le1 .
\end{equation}
\end{corollary}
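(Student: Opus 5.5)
We prove the three-way equivalence by closing the loop: if $0\le\alpha\le1$ then $G_\alpha\in\LCM$, which gives $G_\alpha\in\CM$, and that in turn forces $0\le\alpha\le1$. The middle implication is the inclusion $\LCM\subset\CM$ recorded in Definition~\ref{def:classes}(v). Only the two outer implications need work, and both come from the sign of the digamma bridge. The key observation is
\begin{equation*}
-(\log G_\alpha)'=(\log F_\alpha)'=H_\alpha ,
\end{equation*}
so the Laplace witness that decides logarithmic complete monotonicity in \eqref{eq:witness-dictionary} is exactly $\WL[H_\alpha]$.

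\emph{Sufficiency.} For $\alpha\in\{0,1\}$ we have $G_\alpha\equiv1$, which lies in $\LCM$ trivially because $-(\log 1)'=0\in\CM$. For $0<\alpha<1$, Theorem~\ref{thm:identity}(i) shows that $H_\alpha$ is completely monotone, and $G_\alpha>0$ on $(0,\infty)$, so $G_\alpha\in\LCM$. If one wants the witness explicitly, transport each summand of \eqref{eq:identity} through Proposition~\ref{prop:transport}(i). The summand $(x+c)^{-1}$ has witness $e^{-ct}\,dt$, so each triple product has the convolution of three such exponentials as its witness. Summing these with the positive prefactor $\alpha(1-\alpha)$ gives a positive measure. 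This step is not strictly needed, since Theorem~\ref{thm:identity}(i) already gives complete monotonicity.

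\emph{Necessity.} Suppose $\alpha>1$. Theorem~\ref{thm:identity}(ii) gives $H_\alpha<0$ on $(0,\infty)$. Hence
\begin{equation*}
G_\alpha'=-H_\alpha G_\alpha>0 ,
\end{equation*}
so $G_\alpha$ is strictly increasing. By Lemma~\ref{lem:one-point}(i), a certified point with $G_\alpha'>0$ excludes $G_\alpha\in\CM$, and therefore also excludes $G_\alpha\in\LCM$. This closes the loop.

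The only point needing care is the degenerate endpoints and the positivity of $G_\alpha$; both are immediate since $\Gamma>0$ on $(0,\infty)$. There is no real obstacle here, because the work was done in Theorem~\ref{thm:identity}. The one thing I would double-check is that the exclusion for $\alpha>1$ rests on strict increase and not merely on non-monotonicity. Strict increase holds because $H_\alpha$ is strictly negative, not just nonpositive, by part (ii) of that theorem.
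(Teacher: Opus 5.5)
Your proof is correct and follows the same route as the paper: sufficiency comes from $-(\log G_\alpha)'=H_\alpha\in\CM$ by Theorem~\ref{thm:identity}(i) together with $\LCM\subset\CM$, with the endpoints trivial. Necessity comes from the strict sign $H_\alpha<0$ in Theorem~\ref{thm:identity}(ii), which gives $G_\alpha'=-H_\alpha G_\alpha>0$ and excludes both classes. Your optional convolution description of the witness is accurate but, as you note, not needed.
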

\begin{proof}
For \(0<\alpha<1\), \(-(\log G_\alpha)'=H_\alpha\in\CM\) by
Theorem~\ref{thm:identity}(i), so \(G_\alpha\in\LCM\subset\CM\); the
endpoints are trivial. For \(\alpha>1\),
\(G_\alpha'=-H_\alpha G_\alpha>0\) by Theorem~\ref{thm:identity}(ii), so
\(G_\alpha\) is strictly increasing and belongs to neither class.
\end{proof}

\subsection{Szab\'o's strict cutoff for the sign-adjusted bridge}

Szab\'o \cite{szabo-cm} proved that \(yH_d(y)\) is strictly completely
monotone for \(0<d<1\) and asked for the exact power cutoff in Open
Problem~1.5. The sign-adjusted formulation below also covers \(d>1\); the
case \(d=1\) is degenerate because \(H_1\equiv0\).

\begin{lemma}[Laplace witness for the bridge]\label{lem:kernel}
For \(d>0\) and \(y>0\),
\begin{equation}\label{eq:kernel}
H_d(y)=\int_0^\infty e^{-yt}\,g_d(t)\,dt,
\qquad
g_d(t)=\frac{1-e^{-dt}}{1-e^{-t}}-d ,
\end{equation}
with \(g_d(0^+)=0\) and \(g_d(\infty)=1-d\). Moreover
\begin{equation}\label{eq:kernel-derivative}
y\,H_d(y)=\int_0^\infty e^{-yt}\,g_d'(t)\,dt .
\end{equation}
\end{lemma}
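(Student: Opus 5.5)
The plan is to derive the Laplace representation of \(H_d\) termwise from the elementary integrals, and then to obtain \eqref{eq:kernel-derivative} by integrating by parts. For the first step I use the standard Laplace form of the digamma difference,
\begin{equation*}
\psi(y+d)-\psi(y)=\int_0^\infty e^{-yt}\,\frac{1-e^{-dt}}{1-e^{-t}}\,dt ,
\end{equation*}
which I would justify from the series in the proof of Theorem~\ref{thm:identity}. Each summand satisfies
\begin{equation*}
\frac{1}{y+k}-\frac{1}{y+k+d}=\int_0^\infty e^{-(y+k)t}\bigl(1-e^{-dt}\bigr)\,dt .
\end{equation*}
Summing the geometric factor \(\sum_k e^{-kt}=(1-e^{-t})^{-1}\) is legitimate by monotone convergence, since all integrands are nonnegative. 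Subtracting \(d/y=\int_0^\infty e^{-yt}d\,dt\) then gives \eqref{eq:kernel}. The subtraction is valid because both integrals converge absolutely for \(y>0\): the quotient \((1-e^{-dt})/(1-e^{-t})\) is bounded near \(0\), with limit \(d\), and bounded at \(\infty\), with limit \(1\).

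The boundary values of \(g_d\) are immediate. As \(t\downarrow0\), the quotient tends to \(d\) by l'Hôpital, so \(g_d(0^+)=0\). As \(t\to\infty\), the quotient tends to \(1\), so \(g_d(\infty)=1-d\). I would also record that \(g_d\) is smooth on \((0,\infty)\) and extends smoothly to \(t=0\), since the quotient is analytic at \(0\). Consequently \(g_d'\) is bounded near \(0\).

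For \eqref{eq:kernel-derivative}, I integrate by parts on \([\varepsilon,R]\):
\begin{equation*}
y\int_\varepsilon^R e^{-yt}g_d(t)\,dt
=\bigl[-e^{-yt}g_d(t)\bigr]_\varepsilon^R+\int_\varepsilon^R e^{-yt}g_d'(t)\,dt .
\end{equation*}
The boundary terms vanish in the limit: at \(\varepsilon\) because \(g_d(0^+)=0\), and at \(R\) because \(g_d\) is bounded and \(e^{-yR}\to0\).

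The only point needing care, and the main obstacle, is the absolute integrability of \(g_d'\) at infinity, so that the right-hand side converges. I would compute
\begin{equation*}
g_d'(t)=\frac{d\,e^{-dt}(1-e^{-t})-e^{-t}(1-e^{-dt})}{(1-e^{-t})^2}.
\end{equation*}
This is \(O(e^{-\min(d,1)t})\) as \(t\to\infty\) and bounded near \(0\), hence integrable against \(e^{-yt}\) for every \(y>0\).
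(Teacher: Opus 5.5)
Your proposal is correct and follows essentially the paper's route: subtract \(d/y\) from the Laplace form of \(\psi(y+d)-\psi(y)\), read off the boundary behavior of \(g_d\), and integrate by parts using \(g_d(0^+)=0\) and the boundedness and smoothness of \(g_d\). The one difference is that you derive the digamma integral from the series by monotone convergence instead of citing DLMF~5.9.16, which makes the argument self-contained. The exponential decay of \(g_d'\) is more than you need, since boundedness of \(g_d'\) together with the factor \(e^{-yt}\) already ensures convergence.
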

\begin{proof}
The integral representation
\(\psi(y+d)-\psi(y)=\int_0^\infty e^{-yt}(1-e^{-dt})/(1-e^{-t})\,dt\)
\cite[Eq.~5.9.16]{dlmf} and \(d/y=\int_0^\infty e^{-yt}\,d\,dt\) give
\eqref{eq:kernel}. The function \(g_d\) is analytic on \([0,\infty)\) with
\(g_d(t)=\tfrac{d(1-d)}{2}\,t+O(t^2)\) at \(0\) and
\(g_d(t)=1-d+O(e^{-\min(1,d)t})\) at \(\infty\); in particular \(g_d\) is
bounded with bounded integrable derivative. Integration by parts,
\begin{equation}
y\int_0^\infty e^{-yt}g_d(t)\,dt
=\Bigl[-e^{-yt}g_d(t)\Bigr]_0^\infty+\int_0^\infty e^{-yt}g_d'(t)\,dt,
\end{equation}
and \(g_d(0^+)=0\) give \eqref{eq:kernel-derivative}.
\end{proof}

\begin{lemma}[Two-point convexity certificate]\label{lem:jensen}
For every \(t>0\), \(g_d'(t)\) has the strict sign of \(1-d\):
\(g_d'>0\) on \((0,\infty)\) for \(0<d<1\), and \(g_d'<0\) for \(d>1\).
\end{lemma}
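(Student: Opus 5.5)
We write \(s=e^{-t}\in(0,1)\) and reduce the sign of \(g_d'\) to a tangent-line (two-point convexity) inequality for the power function \(h(s)=s^d\) on \([s,1]\). With
\[
\varphi(s)=\frac{1-s^d}{1-s}=\frac{h(1)-h(s)}{1-s},
\]
Lemma~\ref{lem:kernel} gives \(g_d(t)=\varphi(e^{-t})-d\). The chain rule then gives
\[
g_d'(t)=-e^{-t}\varphi'(e^{-t}).
\]
Since \(e^{-t}>0\), the claim is equivalent to showing that \(\varphi'(s)\) has the strict sign of \(d-1\) for every \(s\in(0,1)\).

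Next we compute \(\varphi'\) by the quotient rule and clear the positive factor \((1-s)^2\):
\[
(1-s)^2\varphi'(s)=(1-s^d)-d s^{d-1}(1-s)=h(1)-h(s)-h'(s)(1-s).
\]
The right-hand side is the gap at the point \(1\) between \(h\) and its tangent line at \(s\). Geometrically, \(\varphi(s)\) is the slope of the chord of \(h\) from \(s\) to \(1\), and the expression compares that chord slope with the tangent slope \(h'(s)\).

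The sign now follows from strict convexity or concavity of \(h\) on \((0,1]\). We have \(h''(s)=d(d-1)s^{d-2}\), and this has the strict sign of \(d-1\) for \(s>0\).
\begin{itemize}
\item For \(d>1\), \(h\) is strictly convex. The strict tangent-line inequality \(h(1)>h(s)+h'(s)(1-s)\) holds for \(s<1\), so \(\varphi'(s)>0\) and therefore \(g_d'<0\).
\item For \(0<d<1\), \(h\) is strictly concave. The inequality reverses, so \(\varphi'(s)<0\) and \(g_d'>0\).
\end{itemize}
To make the strictness explicit without appealing to general convexity facts, one can use Taylor's theorem with Lagrange remainder:
\[
h(1)-h(s)-h'(s)(1-s)=\tfrac12 h''(\xi)(1-s)^2
\quad\text{for some }\xi\in(s,1).
\]
Here \(h''(\xi)\) is nonzero with the sign of \(d-1\).

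There is no serious obstacle in this argument. The only points that need care are the sign bookkeeping in the change of variables \(s=e^{-t}\), which reverses orientation, and the observation that \(t>0\) corresponds to the open interval \(s\in(0,1)\). Strictness of the inequality is needed only on that open interval, and the Taylor remainder supplies it.
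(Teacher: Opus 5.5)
Your proof is correct. It establishes the same scalar inequality as the paper, but with a different convexity argument.

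The paper stays in the variable \(t\) and writes the numerator as \(N_d(t)=d\,e^{-dt}+(1-d)\,e^{-(d+1)t}-e^{-t}\). It then applies Jensen's inequality to \(u\mapsto e^{-ut}\), which is convex in the exponent for every \(d\). The dichotomy \(d<1\) versus \(d>1\) is encoded in the choice of two different two-point measures (barycenter \(1\), respectively normalized barycenter \(d\)).

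You instead pass to \(s=e^{-t}\) and use convexity in the base variable of \(h(s)=s^d\), whose type switches exactly at \(d=1\). Your tangent-line gap and the paper's numerator are related by \(N_d(t)=-e^{-t}\bigl[h(1)-h(s)-h'(s)(1-s)\bigr]\) at \(s=e^{-t}\). So both arguments certify the same weighted AM--GM inequality in each regime.

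Your route handles both regimes with a single identity, and the sign comes automatically from \(h''(s)=d(d-1)s^{d-2}\). The Lagrange form also yields the exact expression \(g_d'(t)=-\tfrac12 d(d-1)e^{-t}\xi^{d-2}\) for some \(\xi\in(e^{-t},1)\), which recovers the slope \(d(1-d)/2\) at \(t=0^+\) noted in Lemma~\ref{lem:kernel}.

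The paper's version avoids the change of variables and presents the numerator directly as the Jensen gap of an explicit measure. That fits its certificate framing.
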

\begin{proof}
Write \(\varphi_d(t)=(1-e^{-dt})/(1-e^{-t})\), so \(g_d'=\varphi_d'\) and
\begin{equation}
\varphi_d'(t)=\frac{N_d(t)}{(1-e^{-t})^2},
\qquad
N_d(t)=d\,e^{-dt}+(1-d)\,e^{-(d+1)t}-e^{-t},
\end{equation}
after expanding
\(N_d=de^{-dt}(1-e^{-t})-e^{-t}(1-e^{-dt})\). The function
\(s\mapsto e^{-st}\) is strictly convex on \(\R\) for each fixed \(t>0\).

For \(0<d<1\), the two-point probability measure
\(\nu=d\,\delta_{d}+(1-d)\,\delta_{d+1}\) has barycenter
\(d\cdot d+(1-d)(d+1)=1\). Strict convexity at this barycenter gives
\begin{equation}
d\,e^{-dt}+(1-d)\,e^{-(d+1)t}
=\int e^{-st}\,d\nu(s)>e^{-t\cdot1},
\end{equation}
that is, \(N_d(t)>0\).

For \(d>1\), the measure \(\nu=\delta_1+(d-1)\,\delta_{d+1}\) has total mass
\(d\) and normalized barycenter \((1+(d-1)(d+1))/d=d\). Strict convexity
applied to \(\nu/d\) gives
\begin{equation}
\frac1d\Bigl(e^{-t}+(d-1)\,e^{-(d+1)t}\Bigr)>e^{-dt},
\end{equation}
that is, \(N_d(t)<0\).
\end{proof}

\begin{theorem}[Strict cutoff at \(1\)]
\label{thm:szabo}
Let \(d\in(0,1)\cup(1,\infty)\) and define
\begin{equation}\label{eq:H-sharp}
H_d^\sharp(y)=\operatorname{sign}(1-d)\,H_d(y).
\end{equation}
Then \(H_d^\sharp(y)>0\) for \(y>0\), and
\(y^aH_d^\sharp(y)\) is strictly completely monotone on \((0,\infty)\)
exactly when \(a\le1\). Thus Szab\'o's cutoff is \(1\) for \(0<d<1\), and
the sign-adjusted bridge has the same cutoff for \(d>1\).
\end{theorem}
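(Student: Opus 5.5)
Positivity of \(H_d^\sharp\) is immediate from Theorem~\ref{thm:identity}(i)--(ii). For the sufficiency direction I will first treat \(a=1\). By Lemma~\ref{lem:kernel}, \(yH_d^\sharp(y)=\int_0^\infty e^{-yt}\,|g_d'(t)|\,dt\), using \(\operatorname{sign}(1-d)g_d'=|g_d'|\). By Lemma~\ref{lem:jensen} this density is strictly positive on \((0,\infty)\). Hence \(\WL[yH_d^\sharp]\) is a positive, absolutely continuous measure that is not identically zero. Then \((-1)^n(yH_d^\sharp)^{(n)}(y)=\int e^{-yt}t^n|g_d'(t)|\,dt>0\), which is strict complete monotonicity. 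For \(a<1\), write \(y^aH_d^\sharp=y^{a-1}\cdot(yH_d^\sharp)\). Here \(y^{a-1}=\Lap\{t^{-a}/\Gamma(1-a)\}\) is strictly CM, and Proposition~\ref{prop:transport}(i) shows the product has witness equal to a convolution of two positive densities. That convolution is strictly positive, so strictness persists. (For \(a=1\) itself the factor is \(1\).) The case \(a\le 0\) is included, since \(y^{a-1}\) with \(1-a\ge1\) still has a positive Laplace density.

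For necessity, take \(a>1\) and use the asymptotics as \(y\downarrow0\). By Theorem~\ref{thm:identity}(iii), \(H_d(y)=(1-d)/y+O(1)\), so \(yH_d^\sharp(y)\to|1-d|>0\). Then \(y^aH_d^\sharp(y)=y^{a-1}\cdot yH_d^\sharp(y)\to0\) as \(y\downarrow0\), while the function is positive. Lemma~\ref{lem:one-point}(ii) now says that a nonzero completely monotone function cannot vanish at \(0^+\), since CM functions are nonincreasing. Therefore \(y^aH_d^\sharp\notin\CM\) for every \(a>1\), and the cutoff is exactly \(1\). The statements for \(0<d<1\) (Szab\'o's original setting) and for \(d>1\) follow together, because the argument depends on \(d\) only through \(\operatorname{sign}(1-d)\).

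The main technical point is checking that Lemma~\ref{lem:kernel}'s integration by parts and the strict positivity of the convolution density are legitimate. The density \(t^{-a}\) is locally integrable for \(a<1\), and \(|g_d'|\) is bounded and integrable, so the convolution is finite and positive everywhere. I therefore expect the proof to be routine once these integrability checks are made. The real content, the sign of \(g_d'\), is already supplied by Lemma~\ref{lem:jensen}.
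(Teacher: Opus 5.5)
Your proof is correct and follows the paper's argument: positivity from Theorem~\ref{thm:identity}, sufficiency via \(yH_d^\sharp=\Lap\{|g_d'|\}\) from Lemmas~\ref{lem:kernel} and~\ref{lem:jensen}, and necessity from the boundary value \(y^aH_d^\sharp(0^+)=0\) together with Lemma~\ref{lem:one-point}(ii). The only difference is minor: for \(a<1\) you get strictness from the strictly positive convolution of \(t^{-a}/\Gamma(1-a)\) with \(|g_d'|\), whereas the paper applies the Leibniz product rule to \(y^{-(1-a)}\cdot(yH_d^\sharp)\); both are valid.
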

\begin{proof}
Theorem~\ref{thm:identity} gives \(H_d^\sharp>0\). For sufficiency,
Lemmas~\ref{lem:kernel} and~\ref{lem:jensen} give
\(yH_d^\sharp(y)=\Lap\{|g_d'|\}(y)\), where \(|g_d'(t)|>0\) for every
\(t>0\). Hence every alternating derivative of \(yH_d^\sharp\) is strictly
positive. For \(a<1\),
\(y^aH_d^\sharp=y^{-(1-a)}(yH_d^\sharp)\); the product rule shows strict
complete monotonicity.

For necessity, let \(a>1\). By Theorem~\ref{thm:identity}(iii),
\(H_d^\sharp(y)=|1-d|/y+O(1)\) as \(y\downarrow0\), so
\begin{equation}
y^aH_d^\sharp(y)=|1-d|\,y^{a-1}+O(y^{a})\longrightarrow0
\qquad(y\downarrow0).
\end{equation}
The function \(y^aH_d^\sharp\) is nonzero and nonnegative with boundary
value zero. Lemma~\ref{lem:one-point}(ii) rules out complete monotonicity.
\end{proof}

\section{Gamma quotients as polynomial witnesses}
\label{sec:quotient-obstructions}

Baricz \cite{baricz-turan} asked whether
\begin{equation}\label{eq:baricz-quotient}
x\longmapsto
\frac{\Gamma(x)\Gamma(x-a+b)}{\Gamma(x-a)\Gamma(x+b)}
\end{equation}
is a Bernstein function on \((a,\infty)\) for every \(a,b>0\). Substituting
\(y=x-a\) and using \(\Gamma(y+a)/\Gamma(y)\cdot\Gamma(y+b)/\Gamma(y+a+b)\),
the question concerns the symmetric family
\begin{equation}\label{eq:g-def}
g_{a,b}(y)=\frac{\Gamma(y+a)\,\Gamma(y+b)}{\Gamma(y)\,\Gamma(y+a+b)}
=g_{b,a}(y),\qquad y>0 .
\end{equation}
The reciprocal family is classical: from the digamma kernel
\cite[Eq.~5.9.16]{dlmf},
\begin{equation}\label{eq:baricz-kernel}
(\log g_{a,b})'(y)
=\int_0^\infty e^{-yt}\,
\frac{(1-e^{-at})(1-e^{-bt})}{1-e^{-t}}\,dt\;\in\CM,
\end{equation}
so \(g_{a,b}\) is strictly increasing to \(1\) and \(1/g_{a,b}\in\LCM\) for
\emph{all} \(a,b>0\) (cf.\ Bustoz and Ismail \cite{bustoz-ismail}). The
Bernstein question for \(g_{a,b}\) itself remains. For integer \(a\), its
witness is a Jacobi polynomial that decides the orthogonal strip and several
intermediate windows.

\begin{lemma}[Partial-fraction and Jacobi form of the witness]
\label{lem:baricz-witness}
Let \(a\in\N\) and let \(b>0\) with \(b\notin\{1,\dots,a-1\}\), so that
\begin{equation}
g_{a,b}(y)=\prod_{i=0}^{a-1}\frac{y+i}{y+b+i}
\end{equation}
has no common zero--pole pair. Then
\begin{equation}\label{eq:baricz-partial}
\begin{aligned}
1-g_{a,b}(y)&=\sum_{i=0}^{a-1}\frac{c_i}{y+b+i},\\
c_i&=(-1)^{a+i+1}\,
\frac{\prod_{j=0}^{a-1}(b+i-j)}{i!\,(a-1-i)!},\\
\sum_{i=0}^{a-1}c_i&=ab,
\end{aligned}
\end{equation}
and the inverse Laplace transform of \(g_{a,b}'\) is
\begin{equation}\label{eq:jacobi-witness}
\begin{aligned}
\rho_{a,b}(t)
&=t\,e^{-bt}\sum_{i=0}^{a-1}c_i\,e^{-it}\\
&=(-1)^{a+1}\,b\;t\,e^{-bt}\;
P^{(b-a,\,1)}_{a-1}\!\bigl(1-2e^{-t}\bigr),
\end{aligned}
\end{equation}
The density \(\rho_{a,b}(t)\,dt\) belongs to \(\mathcal M_L\). Here
\(P^{(\alpha,\beta)}_n\) is the Jacobi polynomial, defined for
arbitrary parameters by the terminating hypergeometric series
\cite[Eq.~18.5.7]{dlmf} and orthogonal on \((-1,1)\) exactly when
\(\alpha,\beta>-1\).
\end{lemma}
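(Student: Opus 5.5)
The plan has four steps: reduce \(g_{a,b}\) to a rational function, take its partial fractions, invert the Laplace transform of the derivative termwise, and recognize the resulting polynomial as a Jacobi polynomial.

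\emph{Rational form.} For \(a\in\N\), the recurrence \(\Gamma(y+a)/\Gamma(y)=\prod_{i=0}^{a-1}(y+i)\) and its analogue at \(y+b\) give the product formula. When \(b\notin\{1,\dots,a-1\}\), the poles \(-b-i\) are distinct from the zeros \(-j\), since \(b+i=j\) would force \(b\in\{1,\dots,a-1\}\) or \(b\le0\).

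\emph{Partial fractions.} The function \(g_{a,b}\) has equal numerator and denominator degrees and leading ratio \(1\). Hence \(1-g_{a,b}\) is a proper rational function with simple poles at \(y=-b-i\). Its residue there is
\[
c_i=-\prod_j(-b-i+j)\Big/\prod_{l\ne i}(l-i).
\]
The denominator is \(\prod_{l\ne i}(l-i)=(-1)^i i!\,(a-1-i)!\), and the numerator is \((-1)^a\prod_j(b+i-j)\), which gives the stated \(c_i\). The sum \(\sum c_i\) is read off from \(y(1-g_{a,b}(y))\to\sum c_i\) as \(y\to\infty\). Since \(1-\prod(1+b/(y+i))^{-1}=ab/y+O(y^{-2})\), this yields \(ab\).

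\emph{Laplace witness.} We have \(g_{a,b}'=\sum_i c_i (y+b+i)^{-2}\), and each term is the Laplace transform of \(c_i t e^{-(b+i)t}\). Summing gives the first line of \eqref{eq:jacobi-witness}. This is a finite sum of functions with \(|\rho|\le Ct e^{-bt}\), so admissibility \eqref{eq:laplace-admissibility} is immediate, and uniqueness comes from Proposition~\ref{prop:witness}(i).

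\emph{Jacobi identification.} This is the main obstacle. I would use the terminating series in the variable \((1-x)/2=e^{-t}\):
\[
P_n^{(\alpha,\beta)}(x)=\frac{(\alpha+1)_n}{n!}\sum_{k=0}^n\frac{(-n)_k(n+\alpha+\beta+1)_k}{(\alpha+1)_k\,k!}\Bigl(\frac{1-x}2\Bigr)^k,
\]
with \(n=a-1\), \(\alpha=b-a\), \(\beta=1\). The coefficient of \(e^{-kt}\) is then
\[
\frac{(-1)^k\binom{a-1}{k}(b+1)_k(b-a+1+k)_{a-1-k}}{(a-1)!}.
\]
The key step is to check that \(b\) times this equals \((-1)^{a+1}c_k\). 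Using \((-1)^k\binom{a-1}{k}/(a-1)!=(-1)^k/(k!(a-1-k)!)\), it suffices to verify
\[
b\,(b+1)_k\,(b-a+1+k)_{a-1-k}=\prod_{j=0}^{a-1}(b+k-j).
\]
The product on the right runs over the factors \(b+k,b+k-1,\dots,b+k-a+1\). It splits as \(\prod_{m=0}^{k}(b+m)=b(b+1)_k\) times \(\prod_{m=1}^{a-1-k}(b-m)\), and the latter equals \((b-a+1+k)_{a-1-k}\). The signs then combine as \((-1)^{a+1}(-1)^k\), which matches \(c_k\)'s \((-1)^{a+k+1}\).

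The series expression is a polynomial identity in \(b\), so it holds for arbitrary parameters. This includes the non-orthogonal range \(b-a\le-1\), which is why the lemma uses the hypergeometric definition.
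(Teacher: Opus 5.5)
Your proof is correct and follows the paper's route. Both arguments compute the \(c_i\) from residues, obtain the mass from \(y(1-g_{a,b}(y))\to ab\), invert the Laplace transform termwise, and compare with the terminating hypergeometric series at \(n=a-1\), \(\alpha=b-a\), \(\beta=1\). The only difference is bookkeeping in the final step: the paper matches the ratio \(c_{i+1}/c_i\) to a \({}_2F_1\) term ratio and normalizes by \(c_0/\binom{b-1}{a-1}\), whereas you compare all coefficients directly through \(\prod_{j=0}^{a-1}(b+k-j)=b\,(b+1)_k\,(b-a+1+k)_{a-1-k}\). Your version avoids division and makes the polynomial dependence on \(b\) explicit.
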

\begin{proof}
The gamma recurrence gives the product form. Since
\(1-g_{a,b}\) is a proper rational function with the simple poles
\(-(b+i)\), the partial fraction \eqref{eq:baricz-partial} holds with
\(c_i=-\operatorname*{Res}_{y=-(b+i)}g_{a,b}\); evaluating the residue,
\begin{equation}
c_i=-\,\frac{\prod_{j=0}^{a-1}(j-b-i)}{\prod_{j\ne i}(j-i)}
=(-1)^{a+i+1}\,\frac{\prod_{j=0}^{a-1}(b+i-j)}{i!\,(a-1-i)!},
\end{equation}
using \(\prod_{j\ne i}(j-i)=(-1)^ii!\,(a-1-i)!\). The mass identity follows
from \(y(1-g_{a,b}(y))\to ab\) as \(y\to\infty\). Differentiating
\eqref{eq:baricz-partial} gives
\(g_{a,b}'(y)=\sum_ic_i(y+b+i)^{-2}\), whose inverse Laplace transform is
the first expression in \eqref{eq:jacobi-witness}.

For the Jacobi form, set \(P(x)=\sum_{i=0}^{a-1}c_ix^i\). The term ratio
\begin{equation}
\frac{c_{i+1}}{c_i}
=-\,\frac{(a-1-i)\,(b+1+i)}{(i+1)\,(b+1-a+i)}
\end{equation}
identifies \(P(x)=c_0\cdot{}_2F_1(1-a,\,b+1;\,b+1-a;\,x)\), and the
classical formula
\(P^{(\alpha,\beta)}_n(1-2x)
=\binom{n+\alpha}{n}\,{}_2F_1(-n,\,1+\alpha+\beta+n;\,\alpha+1;\,x)\)
\cite[Eq.~18.5.7]{dlmf} with \(n=a-1\), \(\alpha=b-a\), \(\beta=1\) gives
\begin{equation}
P(x)=\frac{c_0}{\binom{b-1}{a-1}}\;P^{(b-a,1)}_{a-1}(1-2x)
=(-1)^{a+1}\,b\;P^{(b-a,1)}_{a-1}(1-2x),
\end{equation}
since
\(c_0=(-1)^{a+1}\Gamma(b+1)/(\Gamma(b-a+1)(a-1)!)\) and
\(\binom{b-1}{a-1}=\Gamma(b)/(\Gamma(a)\Gamma(b-a+1))\).
\end{proof}

\begin{theorem}[Orthogonal-strip classification for integer \(a\)]
\label{thm:baricz-classification}
Let \(a\in\N\) and \(b>a-1\) real. Then
\begin{equation}
g_{a,b}\in\BF
\quad\Longleftrightarrow\quad
a=1,
\end{equation}
and for \(a=1\), \(g_{1,b}(y)=y/(y+b)\) is a complete Bernstein function.
By the symmetry \(g_{a,b}=g_{b,a}\), the symmetric statement also holds when
\(b\in\N\) and \(a>b-1\). In particular Baricz's question has a negative
answer for every integer \(a\ge2\) with \(b>a-1\).
\end{theorem}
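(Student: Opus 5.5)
The plan is to use Lemma~\ref{lem:baricz-witness} together with the dictionary \eqref{eq:witness-dictionary}: \(g_{a,b}\ge0\) on \((0,\infty)\), so \(g_{a,b}\in\BF\) exactly when the Laplace witness \(\rho_{a,b}(t)\,dt\) is a nonnegative measure. Since \(t\,e^{-bt}>0\) and \(b>0\), this reduces, via the Jacobi form \eqref{eq:jacobi-witness}, to the single sign question
\[
(-1)^{a+1}P^{(b-a,1)}_{a-1}(1-2e^{-t})\ge0\qquad(t>0),
\]
that is, to the sign of \((-1)^{a+1}P^{(b-a,1)}_{a-1}\) on \((-1,1)\), because \(t\mapsto 1-2e^{-t}\) maps \((0,\infty)\) onto \((-1,1)\). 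The hypothesis \(b>a-1\) also guarantees \(b\notin\{1,\dots,a-1\}\), so the lemma applies.

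For \(a=1\) the polynomial is the constant \(P_0=1\), so \(\rho_{1,b}(t)=b\,t\,e^{-bt}>0\) and \(g_{1,b}\in\BF\). Directly, \(g_{1,b}(y)=y/(y+b)\), and \(1/g_{1,b}=1+b/y\) is a Stieltjes function with data \((b,1,0)\). Hence \(g_{1,b}\in\CBF\) by Proposition~\ref{prop:transport}(iii) (equivalently, the last line of \eqref{eq:witness-dictionary}).

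For \(a\ge2\), the condition \(b>a-1\) gives \(\alpha=b-a>-1\) and \(\beta=1>-1\). Hence \(P^{(b-a,1)}_{a-1}\) is a genuine orthogonal polynomial of degree \(a-1\ge1\) for a positive weight on \((-1,1)\). By the classical zero theorem, all its \(a-1\) zeros are real, simple, and lie in the open interval \((-1,1)\). A simple zero forces a sign change, so \(\rho_{a,b}\) takes strictly negative values on a nonempty open \(t\)-interval, the witness is not \(\ge0\), and \(g_{a,b}\notin\BF\) by Proposition~\ref{prop:witness}(i). This is an exact certificate: the zero \(x_1\) and any \(t\) with \(1-2e^{-t}\) just past \(x_1\) exhibit a negative density. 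The nonzero degree and the correct normalization need to be checked; the leading coefficient of \(P^{(\alpha,\beta)}_n\) is nonzero because \(n+\alpha+\beta+1=b+1>0\), so the degree is exactly \(a-1\).

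The symmetric statement follows by applying the above to \(g_{b,a}=g_{a,b}\) with the roles of \(a\) and \(b\) swapped, and the Baricz corollary follows by the substitution \(y=x-a\) already described. The only step needing care is the orthogonality invocation: one must check that \(\alpha=b-a\) may be any real number greater than \(-1\) (not necessarily an integer) and that the polynomial from \eqref{eq:jacobi-witness} is the standard Jacobi polynomial, which Lemma~\ref{lem:baricz-witness} already guarantees. No other obstacle is expected.
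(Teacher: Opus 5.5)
Your proposal is correct and follows the paper's proof step for step: the case \(a\ge2\) is settled, as in the paper, by the zero theorem for the orthogonal polynomial \(P^{(b-a,1)}_{a-1}\), the increasing bijection \(t\mapsto1-2e^{-t}\) of \((0,\infty)\) onto \((-1,1)\), and uniqueness of the Laplace witness. The only cosmetic difference is at \(a=1\): you certify \(g_{1,b}\in\CBF\) through the Stieltjes data \((b,1,0)\) of \(1/g_{1,b}=1+b/y\), whereas the paper checks the Pick property of \(y/(y+b)\) directly, and both routes follow at once from Proposition~\ref{prop:transport}(iii).
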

\begin{proof}
For \(a=1\), \(g_{1,b}(y)=y/(y+b)\) is rational with the single zero \(0\)
and pole \(-b\), a Pick function nonnegative on \((0,\infty)\), hence
complete Bernstein (Proposition~\ref{prop:transport}(iii)).

Let \(a\ge2\). The Jacobi parameters in \eqref{eq:jacobi-witness} satisfy
\(b-a>-1\) and \(1>-1\), so \(P^{(b-a,1)}_{a-1}\) is a genuine orthogonal
polynomial of degree \(a-1\ge1\) with respect to a positive weight on
\((-1,1)\); by the classical zero theorem \cite[Theorem~3.3.1]{szego}, all
its \(a-1\) zeros are simple and lie in the open interval \((-1,1)\). The
map \(t\mapsto1-2e^{-t}\) is an increasing bijection from \((0,\infty)\)
onto \((-1,1)\), so the continuous density \(\rho_{a,b}\) of
\eqref{eq:jacobi-witness} changes sign at least once (indeed exactly
\(a-1\) times) on \((0,\infty)\). By
Proposition~\ref{prop:witness}(i), \(\rho_{a,b}\) is the only candidate
representation of \(g_{a,b}'\), so \(g_{a,b}'\notin\CM\) and
\(g_{a,b}\notin\BF\).
\end{proof}

\begin{corollary}[Integral cancellation values]
\label{cor:baricz-cancellation}
Let \(a\in\N\) and \(m\in\{1,\dots,a-1\}\). Then
\(g_{a,m}=g_{m,a}\). The function is complete Bernstein for \(m=1\) and
fails to be Bernstein for \(m\ge2\).
\end{corollary}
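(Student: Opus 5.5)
The plan is to use the symmetry \(g_{a,m}=g_{m,a}\), which holds because the defining formula \eqref{eq:g-def} is symmetric in its two parameters. This puts the pair in the situation covered by Theorem~\ref{thm:baricz-classification} after the roles are swapped. Lemma~\ref{lem:baricz-witness} excludes \(b\in\{1,\dots,a-1\}\), because of zero--pole cancellation. So I will not compute a witness for \(g_{a,m}\) directly. Instead I will read the function as \(g_{m,a}\), with integer first parameter \(m\) and second parameter \(a\).

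First check the hypotheses of the theorem for the swapped pair. The first parameter \(m\in\N\) is an integer. The second parameter is \(a\ge m+1>m-1\), so the orthogonal-strip condition \(b>m-1\) holds with \(b=a\). Also \(a\notin\{1,\dots,m-1\}\) because \(a>m\), so Lemma~\ref{lem:baricz-witness} applies to \(g_{m,a}\) with no cancellation. Its witness is \((-1)^{m+1}a\,t e^{-at}P^{(a-m,1)}_{m-1}(1-2e^{-t})\), with Jacobi parameters \(a-m\ge1>-1\) and \(1>-1\).

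Next split into the two cases.

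\textbf{Case \(m=1\).} Here \(g_{1,a}(y)=y/(y+a)\), and the theorem's first part gives complete Bernstein.

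\textbf{Case \(m\ge2\).} The Jacobi polynomial has degree \(m-1\ge1\) and is orthogonal with respect to a positive weight. Its zeros are therefore simple and lie in \((-1,1)\). Pulled back through \(t\mapsto1-2e^{-t}\), the density changes sign on \((0,\infty)\). By Proposition~\ref{prop:witness}(i), the derivative \(g'\) is then not completely monotone, so \(g\notin\BF\).

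The only point needing care is the bookkeeping: the theorem's conclusion must be applied with its variables renamed, and one must confirm that strict inequality \(a>m-1\) holds. Since \(a\ge m+1\), this is immediate. I expect no real obstacle; the corollary is a direct specialization.
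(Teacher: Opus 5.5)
Your proposal is correct and follows the paper's proof: use the symmetry \(g_{a,m}=g_{m,a}\), then apply Theorem~\ref{thm:baricz-classification} with first parameter \(m\) and second parameter \(a\ge m+1>m-1\). Your re-check of the Jacobi-witness hypotheses (\(a\notin\{1,\dots,m-1\}\), parameters \(a-m\ge1\) and \(1\)) only unpacks the theorem's own proof; it is not needed, but it is correct.
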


\begin{proof}
Symmetry gives \(g_{a,m}=g_{m,a}\). Apply
Theorem~\ref{thm:baricz-classification} with first parameter \(m\) and second
parameter \(a>m-1\).
\end{proof}

\begin{proposition}[Interlacing regime]\label{prop:baricz-interlacing}
Let \(a\in\N\) and \(0<b\le1\). Then \(g_{a,b}\in\CBF\).
\end{proposition}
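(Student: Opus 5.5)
The plan is to use the product form from Lemma~\ref{lem:baricz-witness},
\begin{equation*}
g_{a,b}(y)=\prod_{i=0}^{a-1}\frac{y+i}{y+b+i},
\end{equation*}
and to verify the Pick characterization of Proposition~\ref{prop:transport}(iii) directly. The case \(b=1\) is immediate: the product telescopes to \(y/(y+a)\), which is complete Bernstein exactly as in the \(a=1\) case of Theorem~\ref{thm:baricz-classification}. So assume \(0<b<1\).

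Then the zeros \(0,-1,\dots,-(a-1)\) and the poles \(-b,-1-b,\dots,-(a-1)-b\) strictly interlace on the negative axis:
\begin{equation*}
-(a-1)-b<-(a-1)<\dots<-1-b<-1<-b<0 .
\end{equation*}
The largest of these points is the zero at \(0\).

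The key step is a phase computation. For \(\Imag z>0\),
\begin{equation*}
\arg g_{a,b}(z)=\sum_{i=0}^{a-1}\bigl(\arg(z+i)-\arg(z+b+i)\bigr).
\end{equation*}
Since \(i<b+i\), the point \(z+i\) is \(z+b+i\) translated left by \(b\), so for \(\Imag z>0\) its argument is strictly larger and lies in \((0,\pi)\). Each summand therefore lies in \((0,\pi)\). The angular intervals \(\bigl(\arg(z+b+i),\arg(z+i)\bigr)\) are disjoint and ordered, because \(\arg(z+x)\) decreases in \(x\) and the intervals \([i,b+i]\) are disjoint when \(b<1\). Hence the total argument is at most \(\arg(z)-\arg(z+b+a-1)\), which lies in \((0,\pi)\).

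It follows that \(\Imag g_{a,b}(z)>0\) on the upper half-plane. The function \(g_{a,b}\) is rational with all poles on the negative axis, so it is holomorphic on \(\mathbb C\setminus(-\infty,0]\). It is also positive on \((0,\infty)\). By Proposition~\ref{prop:transport}(iii), \(g_{a,b}\in\CBF\).

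An alternative witness route confirms the sign structure. Consider
\begin{equation*}
1/g_{a,b}=\prod_{i=0}^{a-1}\frac{y+b+i}{y+i},
\end{equation*}
which equals \(1\) plus a sum of \(\mathrm{Res}/(y+i)\) terms. Interlacing makes every residue at the poles \(-i\) positive, by the standard sign count of factors to the left and right of each pole. This exhibits \(\WS[1/g_{a,b}]\) as a positive combination: \(b\) in the constant, the residue at \(0\) in the \(a/x\) term, and atoms at \(t=i\). This gives the dictionary entry \eqref{eq:witness-dictionary} directly.

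I would present this residue version as the primary certificate, since it matches the witness calculus. The main obstacle is the sign count of the residues. The residue at \(-i\) is
\begin{equation*}
\prod_{j}(b+j-i)\Big/\prod_{j\ne i}(j-i).
\end{equation*}
The numerator has \(i\) negative factors, those with \(j<i\), since \(b<1\). The denominator also has exactly \(i\) negative factors. The signs therefore cancel, and the residue is positive.
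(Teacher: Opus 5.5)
Your proof is correct, and both of your certificates take a different route from the paper's, though they rest on the same interlacing.

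\textbf{The paper's route.} The paper expands \(g_{a,b}\) itself over its poles, \(1-g_{a,b}(y)=\sum_i c_i/(y+b+i)\), reusing the coefficients \(c_i\) from Lemma~\ref{lem:baricz-witness}. It shows \(c_i>0\) by a factor-sign count and reads off \(\Imag g_{a,b}\ge0\) term by term. This costs nothing once the lemma is available. As a by-product it makes the Laplace witness \(\rho_{a,b}(t)=te^{-bt}\sum_i c_ie^{-it}\) visibly positive, which ties this regime to the Jacobi channel of that section.

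\textbf{Your residue route.} You expand \(1/g_{a,b}\) over the zeros of \(g_{a,b}\), so your sign count lands directly on the dictionary entry \(\WS[1/g]\ge0\) for \(\CBF\). It also exhibits the explicit Stieltjes representation \(1/g_{a,b}(y)=1+r_0/y+\sum_{i\ge1}r_i/(y+i)\), with atoms at the zeros of \(g_{a,b}\).

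\textbf{Your phase route.} The phase argument needs no coefficients at all and gives the strict inequality \(\Imag g_{a,b}>0\). It shows that interlacing is the entire mechanism, and it applies verbatim to any product of factors \((z+\alpha_i)/(z+\beta_i)\) with \(0\le\alpha_i<\beta_i\) and pairwise disjoint intervals \((\alpha_i,\beta_i)\).

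\textbf{A notational slip.} In ``\(b\) in the constant'' and ``the residue at \(0\) in the \(a/x\) term'', the letters of the Stieltjes triple \((a,b,\sigma)\) clash with the parameters of \(g_{a,b}\). The constant term of \(1/g_{a,b}\) is \(1\), not the parameter \(b\), so rename the triple.
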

\begin{proof}
For \(b=1\) the product telescopes to \(g_{a,1}(y)=y/(y+a)\in\CBF\). For
\(0<b<1\), count signs in \eqref{eq:baricz-partial}: in
\(\prod_{j=0}^{a-1}(b+i-j)\), the factors with \(j\le i\) are positive and
the \(a-1-i\) factors with \(j\ge i+1\) are negative, so the product has
sign \((-1)^{a-1-i}\) and
\begin{equation}
\operatorname{sign}(c_i)=(-1)^{a+i+1}\,(-1)^{a-1-i}=+1 .
\end{equation}
Hence \(1-g_{a,b}(y)=\sum_ic_i/(y+b+i)\) with all \(c_i>0\), so for
\(\Imag y>0\),
\begin{equation}
\Imag g_{a,b}(y)
=\sum_{i=0}^{a-1}\frac{c_i\,\Imag y}{|y+b+i|^2}\ge0,
\end{equation}
and \(g_{a,b}\ge0\) on \((0,\infty)\); Proposition~\ref{prop:transport}(iii)
gives \(g_{a,b}\in\CBF\). Equivalently: the zeros \(0,-1,\dots\) and poles
\(-b,-1-b,\dots\) interlace exactly when \(b\le1\).
\end{proof}

\begin{proposition}[Odd intermediate windows are not Bernstein]
\label{prop:baricz-odd}
Let \(a\in\N\), \(a\ge3\), and let \(b\in(m,m+1)\) for an odd integer \(m\)
with \(1\le m\le a-2\). Then \(g_{a,b}\notin\BF\).
\end{proposition}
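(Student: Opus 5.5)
The plan is to use the \emph{tail sign} of the Laplace witness from Lemma~\ref{lem:baricz-witness}. Since \(b\in(m,m+1)\) is not an integer, the hypothesis \(b\notin\{1,\dots,a-1\}\) of that lemma holds. Hence
\[
\rho_{a,b}(t)=t\,e^{-bt}\sum_{i=0}^{a-1}c_i\,e^{-it}
\]
is the unique representing density of \(g_{a,b}'\) in \(\mathcal M_L\) (Proposition~\ref{prop:witness}(i)). By the dictionary \eqref{eq:witness-dictionary}, \(g_{a,b}\in\BF\) would force \(\rho_{a,b}\ge0\) almost everywhere. Because \(\rho_{a,b}\) is continuous, it suffices to show that \(\rho_{a,b}(t)<0\) for all sufficiently large \(t\).

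As \(t\to\infty\) the factor \(\sum_i c_ie^{-it}\) tends to \(c_0\). So \(\rho_{a,b}(t)=t\,e^{-bt}\bigl(c_0+O(e^{-t})\bigr)\), and the sign for large \(t\) is that of \(c_0\), provided \(c_0\neq0\). From \eqref{eq:baricz-partial},
\[
c_0=(-1)^{a+1}\,\frac{\prod_{j=0}^{a-1}(b-j)}{(a-1)!}.
\]
No factor \(b-j\) vanishes because \(b\) is not an integer. The factors with \(0\le j\le m\) are positive. The \(a-1-m\) factors with \(m+1\le j\le a-1\) are negative; this range is nonempty because \(m\le a-2\). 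Therefore
\[
\operatorname{sign}(c_0)=(-1)^{a+1}(-1)^{a-1-m}=(-1)^m=-1,
\]
since \(m\) is odd. This is the same sign count as in Proposition~\ref{prop:baricz-interlacing}, now carried out in the intermediate window.

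It follows that \(\rho_{a,b}<0\) on some half-line \((T,\infty)\), so \(\WL[g_{a,b}']\) is not a positive measure. Hence \(g_{a,b}'\notin\CM\) and \(g_{a,b}\notin\BF\).

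There is no real obstacle. The only points needing care are that \(c_0\ne0\) and that the remaining terms are genuinely of lower order, \(O(t\,e^{-(b+1)t})\), so that they cannot cancel the leading term. Alternatively, one can read the same conclusion off the Jacobi form \eqref{eq:jacobi-witness}: its value at the endpoint \(x=1\) equals \(c_0\), and we have just shown \(c_0<0\).
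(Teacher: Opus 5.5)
Your proposal is correct and follows the paper's proof essentially step for step. Since \(b\) is not an integer, Lemma~\ref{lem:baricz-witness} applies; the tail of \(\rho_{a,b}\) then carries the sign of \(c_0\); and counting the \(a-1-m\) negative factors in \(\prod_{j=0}^{a-1}(b-j)\) gives \(\operatorname{sign}(c_0)=(-1)^m=-1\), which contradicts positivity of the unique Laplace witness. Your remarks that \(c_0\neq0\) and that the remainder is \(O(t\,e^{-(b+1)t})\) only make explicit what the paper leaves implicit.
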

\begin{proof}
Since \(b\notin\{1,\dots,a-1\}\), Lemma~\ref{lem:baricz-witness} applies:
the unique candidate density of \(g_{a,b}'\) is
\(\rho_{a,b}(t)=te^{-bt}\sum_{i=0}^{a-1}c_ie^{-it}\). As \(t\to\infty\),
the sum converges to \(c_0\), so \(\rho_{a,b}\) has the sign of \(c_0\) on
a neighborhood of infinity. In the product \(\prod_{j=0}^{a-1}(b-j)\)
defining \(c_0\), exactly the \(a-1-m\) factors with \(j\ge m+1\) are
negative (as \(m<b<m+1\)), so
\begin{equation}
\operatorname{sign}(c_0)
=(-1)^{a+1}\,(-1)^{a-1-m}=(-1)^{m}=-1 .
\end{equation}
Hence \(\rho_{a,b}<0\) on an interval of positive length, and
Proposition~\ref{prop:witness}(i) gives \(g_{a,b}'\notin\CM\), that is,
\(g_{a,b}\notin\BF\).
\end{proof}

\begin{remark}[Geometry of the witness]
\label{rem:baricz}
For \((a,b)=(2,3)\),
\(P^{(1,1)}_{1}(1-2x)=2-4x\), so
\(\rho_{2,3}(t)=te^{-3t}(-6+12e^{-t})\), negative exactly for \(t>\log2\);
Appendix~\ref{app:certificates} gives an independent rational derivative
certificate. In general, the argument of the damped Jacobi witness sweeps
the full orthogonality interval, reducing failure for \(a\ge2\) to the zero
theorem for positive-degree orthogonal polynomials. The remaining range is
\(1<b<a-1\), where quasi-orthogonality replaces orthogonality.
Proposition~\ref{prop:baricz-odd} and
Corollary~\ref{cor:baricz-cancellation} complete the classification for
\(a\in\{2,3\}\). The first unresolved even window is \(a=4\),
\(b\in(2,3)\); see Problem~\ref{prob:baricz-real}.
\end{remark}

\begin{counterexample}[Dulac--Simon cumulative-Tsallis ratio is not
completely monotone]\label{cx:dulac-simon}
Let
\begin{equation}
\Phi(s)=\frac{(s+1)^2}{2s^2(2s+1)}
\left(1-\frac{\Gamma(s+1)^2}{\Gamma(2s+1)}\right).
\end{equation}
Then \(\Phi\) is not completely monotone on \((0,\infty)\); a fortiori it is
not completely monotone on the source domain \((-1/2,\infty)\) of Dulac and
Simon \cite[Remark~11(c)]{dulac-simon}.
\end{counterexample}
\begin{proof}
For \(s>0\), \(B(s+1,s+1)=\int_0^1[x(1-x)]^s\,dx\). Substituting
\(t=-\log(x(1-x))\), whose two branches are \(x=(1\pm y(t))/2\) with
\(y(t)=\sqrt{1-4e^{-t}}\), gives
\begin{equation}\label{eq:beta-kernel}
B(s+1,s+1)=\int_{\log4}^{\infty}e^{-st}w(t)\,dt,
\qquad
w(t)=\frac{2e^{-t}}{\sqrt{1-4e^{-t}}}=y'(t) .
\end{equation}
Using \(\Gamma(s+1)^2/\Gamma(2s+1)=(2s+1)B(s+1,s+1)\) and the partial
fractions
\begin{equation}
\frac{(s+1)^2}{2s^2(2s+1)}
=\frac{1}{2s^2}+\frac{1}{2(2s+1)},
\qquad
\frac{(s+1)^2}{2s^2}=\frac12+\frac1s+\frac{1}{2s^2},
\end{equation}
we obtain \(\Phi=\Lap\{K\}\) with, for \(a=\log4\),
\begin{equation}
K(t)=\frac t2+\frac14e^{-t/2}
-\mathbf 1_{\{t>a\}}
\left[\frac12w(t)+\int_a^tw(u)\,du+\frac12\int_a^t(t-u)w(u)\,du\right].
\end{equation}
Since \(y'=w\), \(y(a)=0\), and
\(\frac{d}{du}[\log\frac{1+y}{1-y}-2y]=2y'\frac{y^2}{1-y^2}=y\), the
integrals evaluate in closed form: for \(t>a\),
\begin{equation}\label{eq:K-closed}
K(t)=\frac t2+\frac14e^{-t/2}
-\frac{e^{-t}}{\sqrt{1-4e^{-t}}}
-\frac12\log\frac{1+\sqrt{1-4e^{-t}}}{1-\sqrt{1-4e^{-t}}} .
\end{equation}
At \(t=a+\varepsilon\), \(1-4e^{-t}=1-e^{-\varepsilon}\sim\varepsilon\), so
the third term is \(\sim1/(4\sqrt\varepsilon)\) while the logarithm is
\(O(\sqrt\varepsilon)\) and the first two terms stay bounded. Hence
\begin{equation}
K(\log4+\varepsilon)\sim-\frac{1}{4\sqrt{\varepsilon}}
\qquad(\varepsilon\downarrow0),
\end{equation}
and \(K<0\) on a right-neighborhood of \(\log4\). The density belongs to the
admissible signed class of Proposition~\ref{prop:witness}(i):
\begin{equation}\label{eq:K-admissibility}
K(t)=O\!\left((t-\log4)^{-1/2}\right)
\quad(t\downarrow\log4),
\qquad
K(t)=O(e^{-t/2})
\quad(t\to\infty).
\end{equation}
Thus \(\int_0^\infty e^{-st}|K(t)|\,dt<\infty\) for every \(s>0\). By
Proposition~\ref{prop:witness}(i), \(K\) is the only candidate
representation of \(\Phi\), so \(\Phi\notin\CM\).
\end{proof}

\section{Boundary geometry for a fractional symbol}
\label{sec:symbols}

Wakrim \cite{wakrim-w} introduces a Volterra-type fractional time operator
whose symbol, in the normalization of the source, is
\begin{equation}
\Phi_{\alpha,\beta}(s)
=s^{\alpha}\left(1+(1-\alpha)s^{\alpha-1}\right)^{-\beta},
\qquad 0<\alpha<1,\ \beta\ge0.
\end{equation}
Version~2 of the source proves the sharp Bernstein threshold
\(\Phi_{\alpha,\beta}\in\BF\) iff \(0\le\beta\le1\). The result below gives
the complete-Bernstein upgrade: the same interval is the exact \(\CBF\)
range.

\begin{theorem}[The Bernstein and complete Bernstein regimes coincide]
\label{thm:wakrim}
For every \(0<\alpha<1\) and \(\beta\ge0\), the following are equivalent:
\begin{enumerate}[label=\textup{(\roman*)},leftmargin=2.3em]
\item \(0\le\beta\le1\);
\item \(\Phi_{\alpha,\beta}\in\BF\);
\item \(\Phi_{\alpha,\beta}\in\CBF\).
\end{enumerate}
\end{theorem}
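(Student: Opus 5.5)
The plan is to prove (i)\(\Rightarrow\)(iii) with a Pick-sign argument. The implication (iii)\(\Rightarrow\)(ii) holds because \(\CBF\subset\BF\). For (ii)\(\Rightarrow\)(i) I will either quote Wakrim's sharp Bernstein threshold or, to keep the argument self-contained, give a local obstruction at \(s\downarrow0\).

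Write \(c=1-\alpha\in(0,1)\) and
\begin{equation*}
\Phi_{\alpha,\beta}(s)=s^{\alpha}\bigl(1+c\,s^{-c}\bigr)^{-\beta}
=s^{\alpha}\Bigl(\frac{s^{c}}{s^{c}+c}\Bigr)^{\beta}
=s^{\alpha+c\beta}\,(s^{c}+c)^{-\beta}.
\end{equation*}
For \(0\le\beta\le1\) I will verify the Pick property of Proposition~\ref{prop:transport}(iii) by tracking arguments. Let \(z=re^{i\theta}\) with \(0<\theta<\pi\) and principal branches. Then \(\arg s^{\alpha}=\alpha\theta\). The quantity \(w=s^{c}\) has argument \(c\theta\in(0,\pi)\), so \(\arg(w/(w+c))=\arg w-\arg(w+c)\in[0,c\theta)\), since adding a positive real to \(w\) reduces its argument without leaving the upper half-plane. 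Hence
\begin{equation*}
\arg\Phi_{\alpha,\beta}(z)\in\bigl[\alpha\theta,\ \alpha\theta+\beta c\theta\bigr)\subset[0,\theta)\subset[0,\pi),
\end{equation*}
where the middle inclusion uses \(\beta\le1\), so that \(\alpha\theta+\beta c\theta\le(\alpha+c)\theta=\theta\). It follows that \(\Imag\Phi_{\alpha,\beta}(z)\ge0\). Holomorphy on \(\mathbb C\setminus(-\infty,0]\) holds because \(s^{c}+c\neq0\) off the cut, as \(\arg s^{c}\in(-c\pi,c\pi)\). Positivity on \((0,\infty)\) is clear, so \(\Phi_{\alpha,\beta}\in\CBF\). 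An equivalent and perhaps cleaner packaging is composition: \(s^{c}/(s^{c}+c)\) is a \(\CBF\) function composed with \(s^c\), and the identity \(s^{\alpha}u^{\beta}=(s^{\alpha/(1-\beta)}\ldots)\) mixes in the usual log-convexity of the \(\CBF\) cone under geometric means \(f^{\lambda}g^{1-\lambda}\). I would use the argument computation above because it is explicit.

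For the converse (ii)\(\Rightarrow\)(i), take \(\beta>1\) and look at \(s\downarrow0\). There \(\Phi_{\alpha,\beta}(s)\sim c^{-\beta}s^{\alpha+c\beta}\), and the exponent satisfies \(\alpha+c\beta>\alpha+c=1\). So \(\Phi_{\alpha,\beta}(s)=o(s)\), and more precisely
\begin{equation*}
\Phi'_{\alpha,\beta}(s)\sim c^{-\beta}(\alpha+c\beta)\,s^{\alpha+c\beta-1}\to0 .
\end{equation*}
A nonzero completely monotone function cannot vanish at \(0^+\) by Lemma~\ref{lem:one-point}(ii), so \(\Phi'_{\alpha,\beta}\notin\CM\) and \(\Phi_{\alpha,\beta}\notin\BF\). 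The derivative asymptotic is legitimate because \(\Phi_{\alpha,\beta}\) is an explicit product of power functions and of \((s^{c}+c)^{-\beta}\), which is smooth and nonzero in the variable \(s^{c}\) near \(0\). Termwise differentiation of the expansion in powers of \(s^{c}\) then gives the claim.

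I expect the main obstacle to be the branch bookkeeping in the Pick step, in particular checking that \(\arg(w/(w+c))\ge0\) and stays below \(\arg w\) uniformly in the upper half-plane. Once that is in place, the threshold \(\beta\le1\) appears transparently as the condition that the total phase not exceed \(\theta\).
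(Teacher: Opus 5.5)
Your proof is correct, and it differs from the paper's in one of its three implications.

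\textbf{The Pick step (i)\(\Rightarrow\)(iii) and the inclusion (iii)\(\Rightarrow\)(ii).} Your Pick step is the same argument-tracking computation the paper uses. The paper writes \(\arg\Phi_{\alpha,\beta}(z)=p\theta-\beta\arg(z^{a}+a)\) with \(p=\alpha+(1-\alpha)\beta\), which is your \(\alpha\theta+\beta\bigl(\arg w-\arg(w+c)\bigr)\) regrouped. The inclusion \(\CBF\subset\BF\) is identical in both.

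\textbf{The converse (ii)\(\Rightarrow\)(i).} Here the routes differ. The paper simply cites Wakrim's sharp Bernstein threshold (Corollary~3.8 of the source). You instead prove it directly, using the boundary behaviour of \(\Phi_{\alpha,\beta}'\) at \(0^+\) and Lemma~\ref{lem:one-point}(ii).

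What your route buys:
\begin{itemize}
\item The theorem becomes self-contained, with no dependence on the external reference.
\item It shows that the exclusion for \(\beta>1\) comes from the same quantity \(\alpha+c\beta\) that bounds the total phase in the Pick step, now evaluated at the origin.
\end{itemize}

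\textbf{A tightening of the derivative step.} You can replace the ``termwise differentiation'' justification by the exact formula
\[
\Phi_{\alpha,\beta}'(s)=s^{c(\beta-1)}\,(s^{c}+c)^{-\beta-1}\bigl(c(\alpha+c\beta)+\alpha s^{c}\bigr).
\]
It shows at once that \(\Phi_{\alpha,\beta}'>0\), so it is not identically zero. It also shows that \(\Phi_{\alpha,\beta}'(0^+)=0\) when \(\beta>1\), which is exactly what Lemma~\ref{lem:one-point}(ii) needs.

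\textbf{The ``geometric mean'' aside.} Delete it or repair it. If the intended factorization is \(\bigl(s^{\alpha/(1-\beta)}\bigr)^{1-\beta}u^{\beta}\), it fails for \(\beta>c\), because \(s^{\alpha/(1-\beta)}\notin\CBF\) once \(\alpha/(1-\beta)>1\). A version that works for all \(0\le\beta\le1\) is
\[
\Phi_{\alpha,\beta}=(s^{\alpha})^{1-\beta}\,\Phi_{\alpha,1}^{\beta},
\qquad \Phi_{\alpha,1}(s)=\frac{s}{s^{c}+c}\in\CBF.
\]
Combine this with the closure of \(\CBF\) under geometric means \(f^{1-\lambda}g^{\lambda}\), which is immediate from the argument bound.
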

\begin{proof}
Set \(a=1-\alpha\in(0,1)\) and \(p=\alpha+(1-\alpha)\beta\), so that on
\((0,\infty)\)
\begin{equation}
\Phi_{\alpha,\beta}(s)
=s^{1-a}\left(1+as^{-a}\right)^{-\beta}
=\frac{s^{\,p}}{(s^a+a)^{\beta}},
\qquad p-\beta a=\alpha .
\end{equation}
(iii)\(\Rightarrow\)(ii) is the inclusion \(\CBF\subset\BF\), and
(ii)\(\Rightarrow\)(i) is the source threshold:
\(\Phi_{\alpha,\beta}\notin\BF\) for \(\beta>1\)
\cite[Corollary~3.8]{wakrim-w}.

(i)\(\Rightarrow\)(iii). For \(\beta=0\),
\(\Phi_{\alpha,0}(s)=s^\alpha\in\CBF\). Let \(0<\beta\le1\), so
\(0<\alpha<p\le1\). The formula \(s^p/(s^a+a)^\beta\), with principal
powers, extends \(\Phi_{\alpha,\beta}\) holomorphically to
\(\mathbb C\setminus(-\infty,0]\): for \(\Imag z>0\) write
\(\theta=\arg z\in(0,\pi)\); then \(\arg(z^a)=a\theta\in(0,\pi)\), so
\(\Imag(z^a)>0\) and \(z^a+a\ne0\), and adding the positive constant \(a\)
increases the real part while preserving the imaginary part, whence
\begin{equation}
0<\arg\bigl(z^a+a\bigr)<a\theta .
\end{equation}
Therefore
\begin{equation}\label{eq:wakrim-arg}
\arg\Phi_{\alpha,\beta}(z)
=p\,\theta-\beta\arg\bigl(z^a+a\bigr)
\in\bigl((p-\beta a)\,\theta,\;p\,\theta\bigr)
=(\alpha\theta,\;p\theta)\subset(0,\pi),
\end{equation}
so \(\Imag\Phi_{\alpha,\beta}(z)>0\): the symbol maps the upper half-plane
to itself. Since \(\Phi_{\alpha,\beta}>0\) on \((0,\infty)\),
Proposition~\ref{prop:transport}(iii) gives
\(\Phi_{\alpha,\beta}\in\CBF\).
\end{proof}

\section{Stieltjes witnesses: recognition and separation}
\label{sec:stieltjes}

\subsection{A positive sine-mixture witness}

Mishra and Swaminathan \cite{mishra-swaminathan} study the Ramanujan integral
\begin{equation}\label{eq:ramanujan-integral}
I_R(x)=\int_0^\infty\frac{e^{-xt}}{t\,(\pi^2+\log^2t)}\,dt,
\qquad x>0,
\end{equation}
whose density occurs in Ramanujan's formula for the \(\nu\)-function
\cite{hardy}. Their Theorem~8 proves that a Bernstein primitive exists. Its
displayed antiderivative kernel omits one factor \(t^{-1}\), as direct
differentiation shows. The correctly normalized primitive is
\begin{equation}\label{eq:ramanujan-primitive}
P_R(x)=c+\int_0^\infty
\frac{1-e^{-xt}}{t^2(\pi^2+\log^2t)}\,dt,
\qquad c\ge0,
\end{equation}
and satisfies \(P_R'=I_R\).

\begin{theorem}[Stieltjes recognition of the Ramanujan integral]
\label{thm:ramanujan}
The density
\begin{equation}\label{eq:ramanujan-phi}
\phi_0(t)=\frac{1}{t(\pi^2+\log^2t)}
\end{equation}
is completely monotone on \((0,\infty)\). Consequently \(I_R\) is a
Stieltjes function, and the primitive \(P_R\) in
\eqref{eq:ramanujan-primitive} is a complete Bernstein function.
\end{theorem}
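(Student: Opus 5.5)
The plan is to exhibit an explicit sine-mixture Laplace witness for \(\phi_0\), which is the ``positive sine-mixture witness'' the subsection title refers to. The classical identity is
\begin{equation*}
\frac{1}{\pi^2+\log^2t}=\frac{1}{\pi}\int_0^\infty t^{-u}\sin(\pi u)\,\frac{?}{}\,du,
\end{equation*}
and in any case I would derive it rather than quote it. Start from \(\int_0^\infty e^{-ts}s^{u-1}\,ds=\Gamma(u)t^{-u}\) and the Stieltjes-type representation of \(1/(t(\pi^2+\log^2 t))\). Concretely, the cleanest route is the known formula
\begin{equation*}
\frac{1}{t(\pi^2+\log^2 t)}=\int_0^\infty e^{-ts}\,\frac{ds}{?}\quad\text{with density}\quad \int_0^\infty\frac{s^{u-1}}{\Gamma(u)}\,du,
\end{equation*}
that is, \(\phi_0=\Lap\{\kappa\}\) with \(\kappa(s)=\int_0^\infty s^{u-1}/\Gamma(u)\,du\ge0\) (the Ramanujan \(\nu\)-function \(\nu(s)\)). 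To verify this, I would use Fubini: \(\Lap\{\kappa\}(t)=\int_0^\infty t^{-u}\,du=1/\log t\) formally. This is only valid for \(t>1\), so the honest witness must come from the analytic continuation. I would therefore instead prove the Stieltjes form directly via the Pick/Perron recipe of Proposition~\ref{prop:witness}(iii).

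For that Pick/Perron route, consider \(f(z)=1/\log z-1/(z-1)\)? A more natural choice is \(h(z)=1/\log z\), holomorphic on \(\mathbb C\setminus(-\infty,0]\) apart from \(z=1\). On the cut, \(\log(-t\pm i0)=\log t\pm i\pi\), so
\begin{equation*}
-\tfrac1\pi\Imag h(-t+i0)=\frac{1}{\pi^2+\log^2t}.
\end{equation*}
Because \(h\) has a pole at \(1\), I would use \(h_1(z)=\frac{1}{\log z}-\frac{1}{z-1}\), which is removable at \(1\) and tends to \(0\) at \(\infty\) and at \(0\). Applying the Cauchy integral over a keyhole contour, with \(|h_1(z)|=O(1/|\log z|)\) at both ends so that the circles vanish, should give
\begin{equation*}
\frac{1}{z-1}-\frac{1}{\log z}=\int_0^\infty\frac{dt}{(z+t)(\pi^2+\log^2t)}\quad(\text{up to sign}).
\end{equation*}
Dividing appropriately, \(\phi_0\) is then obtained as follows. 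By Proposition~\ref{prop:transport}(ii), it suffices to show \(\phi_0\in\CM\) and then use \(I_R=\Lap\{\phi_0\}\). To show \(\phi_0\in\CM\), I would write \(\phi_0(t)=\int_0^\infty e^{-ts}\kappa(s)\,ds\) with \(\kappa(s)=\int_0^\infty s^{u-1}e^{?}\)\ldots{}

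Rather than juggle these forms, I commit to the sine mixture
\begin{equation*}
\frac{1}{\pi^2+\log^2 t}=\frac1\pi\int_0^1 t^{-u}\,\frac{\sin(\pi u)}{?}\,du\,,
\end{equation*}
which I would establish from \(\int_0^\infty e^{-\lambda v}\sin(\pi v)\,dv=\pi/(\lambda^2+\pi^2)\) with \(\lambda=\log t\) when \(t>1\), together with a symmetric treatment of \(t<1\). Combining the two halves should give
\begin{equation*}
\phi_0(t)=\frac1\pi\int_{0}^{\infty}\bigl(\ldots\bigr)\,t^{-1-v}\ldots
\end{equation*}
Each resulting term \(t^{-1\mp v}\) is CM only when the exponent is nonnegative. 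The main obstacle is therefore to find the right variable change so that the combined kernel is a positive mixture of \(t^{-\gamma}\) with \(\gamma>0\), or of \(1/(t+s)\). The identity \(\frac{1}{\pi^2+\log^2t}=\int_0^\infty \frac{t^{v}\,(\ldots)}{}\) from the Stieltjes formula above seems to do this. Given that Stieltjes formula with positive density \(\sigma(ds)=ds/(\pi^2+\log^2 s)\), the Stieltjes claim follows immediately. Then \(\phi_0\) being a positive mixture gives \(\phi_0\in\CM\), Proposition~\ref{prop:transport}(ii) gives \(I_R\in\St\), and Proposition~\ref{prop:transport}(iii) with \(P_R'=I_R\) and \(P_R\ge0\) (its Lévy density \(\phi_0(t)/t\) is CM) yields \(P_R\in\CBF\) by Definition~\ref{def:classes}(iv).
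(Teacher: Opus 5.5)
Your proposal does not prove the central claim \(\phi_0\in\CM\). Every candidate identity for it still contains placeholders (``?'', ``\ldots''). You also name the decisive step yourself, writing \(\phi_0\) as a positive mixture of completely monotone functions, as the unresolved ``main obstacle''.

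The one computation you carry through is the keyhole/Pick representation
\[
\frac{1}{\log z}-\frac{1}{z-1}=\int_0^\infty\frac{dt}{(z+t)(\pi^2+\log^2t)},
\]
with the sign as displayed here. It is true, but it concerns a different function. It identifies \(dt/(\pi^2+\log^2t)\) as the Stieltjes measure of \(1/\log z-1/(z-1)\). It says nothing about complete monotonicity of \(t\mapsto\phi_0(t)\).

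Nor is it the Stieltjes representation of \(I_R\), whose Stieltjes measure would have to be the inverse Laplace transform of \(\phi_0\). The two functions really differ: the right-hand side equals \(1/2\) at \(z=1\), while with \(t=e^u\)
\[
I_R(1)=\frac12\int_{\R}\bigl(e^{-e^{u}}+e^{-e^{-u}}\bigr)\frac{du}{\pi^2+u^2}<\frac12,
\]
since \(e^{-w}+e^{-1/w}<1\) for \(w>0\). So ``given that Stieltjes formula \ldots\ the Stieltjes claim follows immediately'' is a non sequitur. The ``positive mixture'' invoked in your last sentence was never produced.

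The missing idea sits exactly where you stopped. Your formula \(\int_0^\infty t^{-v}\sin(\pi v)\,dv=\pi/(\pi^2+\log^2t)\) for \(t>1\) fails as a certificate only because \(\sin(\pi v)\) changes sign. Cut it into unit intervals. Since \(\sin(\pi(a+k))=(-1)^k\sin(\pi a)\), the integral equals
\[
\sum_{k\ge0}(-t^{-1})^k\int_0^1t^{-a}\sin(\pi a)\,da=\frac{t}{1+t}\int_0^1t^{-a}\sin(\pi a)\,da .
\]
This holds for \(t>1\) and extends to all \(t>0\) by analyticity in \(\log t\). The paper gets the same relation for every \(t>0\) directly from \(\int_0^1e^{-au}\sin(\pi a)\,da=\pi(1+e^{-u})/(u^2+\pi^2)\) with \(u=\log t\). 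Either way,
\[
\phi_0(t)=\frac{1}{\pi(1+t)}\int_0^1t^{-a}\sin(\pi a)\,da .
\]
This is a product of \((1+t)^{-1}\) with a positive mixture of the powers \(t^{-a}\), \(0<a<1\), hence completely monotone.

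With that in hand your closing steps go through. Proposition~\ref{prop:transport}(ii) gives \(I_R\in\St\). Definition~\ref{def:classes}(iv) gives \(P_R\in\CBF\), once you also check the L\'evy integrability \(\int_0^\infty(1\wedge t)\phi_0(t)t^{-1}\,dt<\infty\), which you skip.
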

\begin{proof}
For \(u\in\R\), elementary integration gives
\(\int_0^1e^{-au}\sin(\pi a)\,da=\pi(1+e^{-u})/(u^2+\pi^2)\). Substituting
\(u=\log t\) yields the sine-mixture identity
\begin{equation}\label{eq:sine-mixture}
\phi_0(t)=\frac{1}{\pi\,(1+t)}\int_0^1t^{-a}\sin(\pi a)\,da .
\end{equation}
Each \(t^{-a}\) with \(0<a<1\) is completely monotone, \(\sin(\pi a)>0\) on
\((0,1)\), and \((1+t)^{-1}\in\CM\); positive mixtures and products of
completely monotone functions are completely monotone
(Proposition~\ref{prop:transport}(i)), so \(\phi_0\in\CM\).

By Proposition~\ref{prop:transport}(ii), \(I_R=\Lap\{\phi_0\}\in\St\).
Moreover \(\phi_0(t)/t\in\CM\), because products of completely monotone
functions are completely monotone, and
\(\int_0^\infty(1\wedge t)\phi_0(t)t^{-1}\,dt<\infty\): near \(0\) the
integrand after multiplication by \(t\) is
\(1/[t(\pi^2+\log^2t)]\), and near infinity it is
\(1/[t^2(\pi^2+\log^2t)]\). Hence \(P_R\) is a Bernstein function with
completely monotone L\'evy density \(\phi_0(t)/t\), so
Definition~\ref{def:classes}(iv) gives \(P_R\in\CBF\).
\end{proof}

\subsection{Failure of product closure}

The Stieltjes cone is convex and fails product closure. The elementary pair
\(1/x\in\St\), \(1/x^2\notin\St\) uses a double pole at the origin. The
function below, arising in the study of windings of planar processes and
Asian options \cite{jedidi-vakeroudis}, gives a nondegenerate pair attached
to one complete Bernstein function. A Pick-sign certificate decides the
product.

\begin{proposition}[Stieltjes structure]
\label{prop:arsinh-structure}
Let \(a(x)=\arsinh\sqrt x=\log(\sqrt x+\sqrt{1+x})\) on \((0,\infty)\).
Then:
\begin{enumerate}[label=\textup{(\roman*)},leftmargin=2.3em]
\item \(a\in\CBF\);
\item \(a'\in\St\), with arcsine representation
\begin{equation}\label{eq:arcsine}
a'(x)=\frac{1}{2\sqrt{x}\sqrt{1+x}}
=\frac12\int_0^1\frac{1}{x+t}\cdot
\frac{dt}{\pi\sqrt{t(1-t)}} ;
\end{equation}
\item \(1/a\in\St\).
\end{enumerate}
\end{proposition}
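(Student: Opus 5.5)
I would prove (ii) first, since it is the computational core, and then obtain (i) and (iii) from it through the transport laws of Proposition~\ref{prop:transport}.

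\emph{Step 1: formula (ii).} Differentiating \(a(x)=\arsinh\sqrt x\) gives
\[
a'(x)=\frac{1}{\sqrt{1+x}}\cdot\frac{1}{2\sqrt x}.
\]
For the arcsine representation I would use the classical Stieltjes transform of the arcsine law,
\[
\int_0^1\frac{dt}{\pi(x+t)\sqrt{t(1-t)}}=\frac{1}{\sqrt{x(x+1)}},\qquad x>0.
\]
To check it, substitute \(t=\sin^2\theta\); the integral becomes \(\frac{2}{\pi}\int_0^{\pi/2}d\theta/(x+\sin^2\theta)\). The standard evaluation \(\int_0^{\pi/2}d\theta/(x+\sin^2\theta)=\pi/(2\sqrt{x(1+x)})\) finishes it. Equivalently, \(z^{-1/2}(1+z)^{-1/2}\) is holomorphic off \([-1,0]\), and its boundary imaginary part on \((-1,0)\) gives the density \(1/(\pi\sqrt{t(1-t)})\) via the Stieltjes--Perron inversion of Proposition~\ref{prop:witness}(iii). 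The measure is positive and finite, so \eqref{eq:stieltjes-representation} holds with zero atoms, and \(a'\in\St\).

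\emph{Step 2: part (i).} From \(a'\in\St\) I would write \(a(x)=\int_0^x a'\), using \(a(0^+)=0\). Then
\[
a(x)=\frac12\int_0^1\log\!\Bigl(1+\frac{x}{t}\Bigr)\frac{dt}{\pi\sqrt{t(1-t)}}.
\]
Each \(\log(1+x/t)\) is complete Bernstein, because \(\log(1+x/t)=\int_0^\infty(1-e^{-xs})e^{-ts}s^{-1}\,ds\) has a completely monotone Lévy density. A positive mixture with finite mass therefore has Lévy density \(\frac12\int_0^1 e^{-ts}s^{-1}\,\mathrm{arcsine}(dt)\), which is completely monotone. The integrability condition holds because \(\int(1\wedge s)s^{-1}e^{-ts}\,ds\asymp 1+\log(1/t)\), and this is integrable against the arcsine law.

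As an alternative for (i), one could use the Pick criterion directly. For \(\Imag z>0\), the function \(\sqrt z\) lies in the first quadrant and \(\sqrt{1+z}\) has argument in \((0,\pi/2)\), so \(w=\sqrt z+\sqrt{1+z}\) has \(\Imag w>0\). A short check is still needed to get \(\arg w\in(0,\pi)\), and hence \(\Imag\log w>0\).

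\emph{Step 3: part (iii).} This follows at once from Proposition~\ref{prop:transport}(iii): \(a\in\CBF\setminus\{0\}\) implies \(1/a\in\St\).

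The main obstacle is routine but needs care: making sure the mixture argument in Step 2 meets the integrability condition near \(t=0\). If that becomes awkward, I would fall back on the Pick-argument route for (i), where the only real check is \(\arg(\sqrt z+\sqrt{1+z})\in(0,\pi)\). That holds because both summands lie in the open first quadrant.
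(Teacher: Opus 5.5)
Your proof is correct. Parts (ii) and (iii) coincide with the paper's argument: the substitution \(t=\sin^2\theta\) for the arcsine formula, then the reciprocal bijection between \(\CBF\setminus\{0\}\) and \(\St\setminus\{0\}\). The genuine difference is in (i).

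The paper proves (i) first, independently of (ii), by the Pick criterion alone. For \(\Imag z>0\), both \(\sqrt z\) and \(\sqrt{1+z}\) lie in the open first quadrant. Their sum therefore avoids \((-\infty,0]\), and \(\Imag a(z)=\arg(\sqrt z+\sqrt{1+z})\in(0,\pi/2)\). This is exactly your fallback, and the ``short check'' you flag is already finished once \(\Imag w>0\).

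Your main route instead integrates the arcsine representation to get \(a(x)=\tfrac12\int_0^1\log(1+x/t)\,\frac{dt}{\pi\sqrt{t(1-t)}}\). You then represent each \(\log(1+x/t)\) by Frullani's formula and interchange the integrals by Tonelli. The integrability you worried about is fine: on \((0,1)\),
\[
\int_0^\infty(1\wedge s)s^{-1}e^{-ts}\,ds=\frac{1-e^{-t}}{t}+\int_1^\infty s^{-1}e^{-ts}\,ds=O(1+\log(1/t)),
\]
and this is integrable against \(t^{-1/2}(1-t)^{-1/2}\).

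The Pick route is shorter, needs no estimates, and makes (i) logically independent of (ii). Yours is more constructive and stays on the Laplace side. It produces the explicit L\'evy density
\[
m(s)=\frac{1}{2s}\int_0^1 e^{-ts}\,\frac{dt}{\pi\sqrt{t(1-t)}}=\frac{e^{-s/2}I_0(s/2)}{2s}.
\]
Equivalently, it gives the Laplace witness \(\WL[a'](ds)=\tfrac12 e^{-s/2}I_0(s/2)\,ds\). That is the kind of explicit datum the paper's witness calculus favors over a bare boundary-sign certificate.
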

\begin{proof}
(i) The function \(a\) is positive on \((0,\infty)\) and extends
holomorphically to \(\mathbb C\setminus(-\infty,0]\): for such \(z\), the
principal roots satisfy \(1+z\notin(-\infty,0]\), and for \(\Imag z>0\) both
\(\sqrt z\) and \(\sqrt{1+z}\) lie in the open first quadrant, so their sum
does as well; in particular the sum avoids \((-\infty,0]\) and
\(\Imag a(z)=\arg\bigl(\sqrt z+\sqrt{1+z}\bigr)\in(0,\pi/2)\). Thus \(a\) is
nonnegative on \((0,\infty)\) and maps the upper half-plane to itself, and
Proposition~\ref{prop:transport}(iii) gives \(a\in\CBF\).

(ii) The first equality is a computation. For the second, put
\(t=\sin^2\theta\) and then \(u=\tan\theta\):
\begin{equation}
\int_0^1\frac{dt}{(x+t)\sqrt{t(1-t)}}
=2\int_0^{\pi/2}\frac{d\theta}{x+\sin^2\theta}
=\frac{\pi}{\sqrt{x(1+x)}}.
\end{equation}
This is the stated arcsine representation.

(iii) By (i) and the bijection
\(\St\setminus\{0\}\ni f\mapsto1/f\in\CBF\setminus\{0\}\)
(Proposition~\ref{prop:transport}(iii)), \(a\in\CBF\) gives \(1/a\in\St\).
\end{proof}

\begin{counterexample}[Arsinh-square logarithmic derivative]
\label{cx:arsinh}
Let \(\varphi(x)=\arsinh^2\sqrt x\) and
\begin{equation}
h(x)=\frac{\varphi'(x)}{\varphi(x)}
=2\,a'(x)\cdot\frac{1}{a(x)}
=\frac{1}{\sqrt x\,\sqrt{1+x}\,\arsinh\sqrt x}.
\end{equation}
Then \(h\) is a product of the two Stieltjes functions of
Proposition~\ref{prop:arsinh-structure}. It is completely monotone and lies
outside the Stieltjes class.
\end{counterexample}
\begin{proof}
Complete monotonicity follows from \(\St\subset\CM\) and product closure
(Proposition~\ref{prop:transport}). For the exclusion we compute a Pick-sign
certificate from Proposition~\ref{prop:witness}(iii). Evaluate the
holomorphic extension of \(h\) at \(z=-t+i0\), \(t>1\). On principal
branches, \(\sqrt z=i\sqrt t\), \(\sqrt{1+z}=i\sqrt{t-1}\), and
\begin{equation}
\arsinh(i\sqrt t)
=\log\bigl(i(\sqrt t+\sqrt{t-1})\bigr)
=A(t)+\frac{\pi i}{2},
\qquad
A(t)=\arcosh\sqrt t>0 .
\end{equation}
Therefore
\begin{equation}
h(-t+i0)
=\frac{1}{i\sqrt t\cdot i\sqrt{t-1}\cdot\bigl(A(t)+\frac{\pi i}2\bigr)}
=-\frac{A(t)-\frac{\pi i}{2}}
{\sqrt{t(t-1)}\,\bigl(A(t)^2+\frac{\pi^2}{4}\bigr)},
\end{equation}
so
\begin{equation}\label{eq:arsinh-witness}
\Imag h(-t+i0)
=\frac{\pi/2}{\sqrt{t(t-1)}\,\bigl(A(t)^2+\frac{\pi^2}{4}\bigr)}>0
\qquad(t>1).
\end{equation}
A Stieltjes function satisfies \(\Imag h\le0\) on the upper half-plane
(Proposition~\ref{prop:witness}(iii)). Equation~\eqref{eq:arsinh-witness}
has the opposite sign, hence \(h\notin\St\).
\end{proof}

\section{Hausdorff witnesses and the discrete boundary}
\label{sec:discrete}

Hausdorff witnesses enter through Bernstein interpolation
(Theorem~\ref{thm:grws}), a derived sequence
(Counterexample~\ref{cx:berg-pedersen}), and coefficient sequences
(Theorem~\ref{thm:lp-rigidity}).

\subsection{Bernstein interpolation of weighted-shift sequences:
a complete classification}

Benhida, Curto, and Exner \cite{benhida-curto-exner} attach to a
geometrically regular weighted shift the weights-squared sequence
\begin{equation}
a_n=\frac{p^n+N}{p^n+D},\qquad p>1,\ -1<N,D<1,
\end{equation}
and ask, in their Sector~II (\(-1<N<0<D\le-N\)), whether \((a_n)\) is the
restriction to \(\N_0\) of a Bernstein function. The answer below is a
complete classification of the whole parameter square; the Sector~II
question is the case \textup{(iii)}.

\begin{theorem}[Bernstein interpolability of GRWS sequences]
\label{thm:grws}
Interpolation at \(n=0\) is understood through \(F(0)=F(0^+)\). Let
\(p>1\), \(q=p^{-1}\), and \(-1<N,D<1\). The sequence
\(a_n=(p^n+N)/(p^n+D)\) is interpolated by a Bernstein function if and only if
\begin{equation}
N=D
\qquad\text{or}\qquad
\bigl(N<D\ \text{and}\ D\le0\bigr).
\end{equation}
Precisely:
\begin{enumerate}[label=\textup{(\roman*)},leftmargin=2.3em]
\item if \(N=D\), then \(a_n\equiv1\) and \(F\equiv1\) interpolates;
\item if \(N<D\le0\), then
\begin{equation}\label{eq:grws-interpolant}
F(x)=\frac{1+Nq^{x}}{1+Dq^{x}},\qquad x>0,
\end{equation}
is a Bernstein interpolant: with \(c=\log p\) and \(u=e^{-cx}\),
\begin{equation}\label{eq:grws-derivative}
F'(x)=c\,(D-N)\,\frac{u}{(1+Du)^2}
=c\,(D-N)\sum_{m\ge1}m\,(-D)^{m-1}e^{-mcx},
\end{equation}
a convergent series with nonnegative coefficients, so \(F'\in\CM\);
\item if \(N<D\) and \(0<D<1\), no Bernstein function interpolates
\((a_n)\): the increments \(d_n=a_{n+1}-a_n\) have the unique signed
Hausdorff representation
\begin{equation}\label{eq:grws-atoms}
d_n=(D-N)\sum_{m\ge1}(1-q^m)(-D)^{m-1}q^{mn},
\end{equation}
whose atom at \(q^2\) has mass \(-(D-N)D(1-q^2)<0\);
\item if \(N>D\), the sequence is strictly decreasing and therefore has no
Bernstein interpolant.
\end{enumerate}
\end{theorem}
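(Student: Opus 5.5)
The proof splits along the four cases, and each case turns on one witness. Cases (i) and (iv) are elementary. If \(N=D\) then \(a_n\equiv1\), and the constant \(F\equiv1\) lies in \(\BF\). If \(N>D\), then \(a_n=1-(D-N)/(p^n+D)\) is strictly decreasing, because \(p^n+D>0\) increases in \(n\) and \(D-N<0\). A Bernstein interpolant is nondecreasing on \([0,\infty)\) once we set \(F(0)=F(0^+)\), so no interpolant exists.

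For case (ii) we write \(p^{-x}=q^x=e^{-cx}=u\), so that \(F(x)=(1+Nu)/(1+Du)\) and \(F(n)=a_n\). Differentiating in \(x\), with \(du/dx=-cu\), gives \(F'=c(D-N)u/(1+Du)^2\). Since \(|D|<1\) and \(0<u<1\), we expand \((1+Du)^{-2}=\sum_{m\ge1}m(-D)^{m-1}u^{m-1}\). When \(D\le0\) every coefficient is nonnegative. The result is a convergent positive combination of exponentials \(e^{-mcx}\), so \(F'\in\CM\) by Proposition~\ref{prop:witness}(i). Positivity of \(F\) on \((0,\infty)\) is easy to check: \(1+Nu>0\) and \(1+Du>0\) because \(|N|,|D|<1\). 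Hence \(F\in\BF\).

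Case (iii) carries the real content, and we use Lemma~\ref{lem:bf-increments}. If some \(F\in\BF\) interpolated the sequence, then \(d_n=F(n+1)-F(n)\) would be a Hausdorff moment sequence with the positive witness given there. By the determinacy in Proposition~\ref{prop:witness}(ii), it is therefore enough to compute the unique signed witness of \((d_n)\) and find a negative atom. We start from \(a_n=1-(D-N)q^n/(1+Dq^n)=1-(D-N)\sum_{m\ge1}(-D)^{m-1}q^{mn}\), which is absolutely convergent since \(|Dq^n|<1\). Taking unit increments gives \(d_n=(D-N)\sum_{m\ge1}(-D)^{m-1}(1-q^m)q^{mn}\), which is \eqref{eq:grws-atoms}. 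This is the moment sequence of the signed measure \(\rho=(D-N)\sum_m(1-q^m)(-D)^{m-1}\delta_{q^m}\) on \([0,1]\). That measure is finite, because its total variation is bounded by \((D-N)\sum|D|^{m-1}<\infty\). By uniqueness, \(\rho\) is \(\WH[d]\). Its atom at \(q^2\) is \(-(D-N)D(1-q^2)\), which is negative when \(N<D\) and \(D>0\). This contradicts the positivity forced by \eqref{eq:bf-increment-witness}.

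The main obstacle is making sure the atoms really are distinct and survive as genuine point masses. The points \(q^m\) are distinct because \(0<q<1\), so no cancellation can merge the \(m=2\) atom with another one. One more subtlety needs checking: the Lemma~\ref{lem:bf-increments} witness contains the extra term \(b\,\delta_1\), but that term sits at \(s=1\), away from \(q^2\), so it cannot offset the negative mass. With these checks, the four cases exhaust the parameter square and give the stated equivalence.
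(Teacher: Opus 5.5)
Your proof is correct and follows the paper's argument. Cases (i) and (iv) are immediate, (ii) uses the same expansion of \(u/(1+Du)^2\), and (iii) combines Lemma~\ref{lem:bf-increments} with Hausdorff determinacy and the negative atom at \(q^2\); the only difference is that you expand \(a_n\) geometrically and difference termwise, whereas the paper first telescopes \(d_n\) to a closed rational form and then applies a partial fraction. Your remark about the \(b\,\delta_1\) term is harmless but unnecessary: determinacy forces the whole positive witness to coincide with \(\rho\), and \(\rho\) already has negative mass at \(q^2\).
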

\begin{proof}
(i) and (iv) are immediate (a Bernstein function has \(F'\in\CM\ge0\)).

(ii) \(F\) is well defined and positive: \(1+Nq^x>0\) and \(1+Dq^x>0\)
since \(|N|,|D|<1\) and \(0<q^x<1\); and \(F(n)=a_n\) by construction.
Differentiating \(F=1+(N-D)u/(1+Du)\) in \(x\) with \(u=e^{-cx}\) gives the
closed form in \eqref{eq:grws-derivative}, and expanding
\((1+Du)^{-2}=\sum_{m\ge1}m(-D)^{m-1}u^{m-1}\)---absolutely convergent since
\(|Du|\le|D|<1\)---gives the series. For \(D\le0\) every coefficient
\(m(-D)^{m-1}\) is nonnegative, each \(e^{-mcx}\) is completely monotone,
and the convergence is locally uniform, so \(F'\in\CM\); with \(F\ge0\) this
gives \(F\in\BF\).

(iii) The first differences telescope to
\(d_n=(D-N)(1-q)q^n/[(1+Dq^n)(1+Dq^{n+1})]\). With \(u=q^n\), the partial
fraction
\begin{equation}
\frac{(1-q)u}{(1+Du)(1+Dqu)}
=\frac1D\left(\frac{1}{1+Dqu}-\frac{1}{1+Du}\right)
\end{equation}
and absolutely convergent geometric expansions give
\eqref{eq:grws-atoms}, i.e.\ the finite signed atomic representing measure
\(\sigma=(D-N)\sum_{m\ge1}(1-q^m)(-D)^{m-1}\delta_{q^m}\) with
\(\|\sigma\|\le(D-N)/(1-D)\). Its atom at \(q^2\) is negative. If a
Bernstein interpolant existed, Lemma~\ref{lem:bf-increments} would make
\((d_n)\) a positive Hausdorff moment sequence; by the determinacy clause of
Proposition~\ref{prop:witness}(ii), the positive measure would coincide with
\(\sigma\), contradicting the negative atom.
\end{proof}

On the range \(N<D\le0\), equation~\eqref{eq:grws-derivative} identifies the
nonconstant part of the unique L\'evy measure of the displayed interpolant:
\begin{equation}\label{eq:grws-levy}
(D-N)\sum_{m\ge1}(-D)^{m-1}\delta_{m\log p}.
\end{equation}
It is nonzero and purely atomic, so this interpolant lies outside \(\CBF\).
The existence of another, complete Bernstein interpolant is
Problem~\ref{prob:grws-cbf}.

\subsection{The Berg--Pedersen derived sequence is not Hausdorff}

Berg and Pedersen \cite{berg-pedersen-horn} attach to their family of
Horn--Bernstein functions a derived sequence \(a_0=1\),
\(a_n=1/t_n-1/t_{n-1}\), where \((t_n)\) is built from the source recurrence
(reproduced in Appendix~\ref{app:certificates}), and ask whether \((a_n)\)
is a Hausdorff moment sequence.

\begin{counterexample}[Seventh-difference violation]\label{cx:berg-pedersen}
The Berg--Pedersen derived sequence is not a Hausdorff moment sequence on
\([0,1]\):
\begin{equation}
H_7(0)=\sum_{j=0}^{7}(-1)^j\binom7j a_j
=-\frac{28629387882812}{1395498975394445}<0 .
\end{equation}
\end{counterexample}
\begin{proof}
The values \(a_0,\dots,a_7\) are explicit rationals computed exactly from
the source recurrence in Appendix~\ref{app:certificates}. The displayed
seventh difference is a finite exact rational computation, and the witness
criterion \eqref{eq:hausdorff-witness} of
Proposition~\ref{prop:witness}(ii) converts its negativity into the
obstruction.
\end{proof}

\subsection{Laguerre--P\'olya rigidity: only pure exponentials have
completely monotone coefficients}

Recall that the Laguerre--P\'olya class \(\LP\) consists of the real entire
functions that are locally uniform limits of real polynomials with only
real zeros; equivalently, of the functions
\(Cx^me^{\sigma x-\beta x^2}\prod_k(1+x/x_k)e^{-x/x_k}\) with
\(C,\sigma\in\R\), \(\beta\ge0\), \(m\in\N_0\), \(x_k\in\R\setminus\{0\}\),
\(\sum_kx_k^{-2}<\infty\) \cite{levin}. Wang and Yang \cite{wang-yang}
prove complete-monotonicity results for coefficient sequences attached to
the Riemann \(\Xi\)-function and the partition function, and ask (open
problem following their Conjectures~1.10--1.11) whether every
\(\psi(x)=\sum_{n\ge0}\gamma_nx^n/n!\in\LP\) with positive, decreasing
coefficient sequence has \((\gamma_n)\) completely monotone. The following
rigidity theorem answers this in a strong form.

Wang and Yang use the forward difference
\(\Delta_{\!f}a_n=a_{n+1}-a_n\) and test
\((-1)^r\Delta_{\!f}^{,r}a_n\ge0\). Our convention
\(\Delta a_n=a_n-a_{n+1}\) gives the same inequalities as
\(\Delta^ra_n\ge0\).

\begin{theorem}[LP rigidity]\label{thm:lp-rigidity}
Let \(\psi(x)=\sum_{n\ge0}\gamma_nx^n/n!\) be a Laguerre--P\'olya function,
not identically zero. Then \((\gamma_n)_{n\ge0}\) is completely monotone if
and only if
\begin{equation}
\psi(x)=\gamma_0\,e^{qx}
\qquad\text{for some }q\in[0,1]\ \text{and}\ \gamma_0>0 .
\end{equation}
\end{theorem}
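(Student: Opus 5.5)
The plan is to use the Hausdorff witness: if \((\gamma_n)\) is completely monotone, Hausdorff's theorem (Proposition~\ref{prop:witness}(ii)) gives a unique positive finite measure \(\rho\) on \([0,1]\) with \(\gamma_n=\int s^n\,d\rho(s)\). Summing the exponential series termwise, which is legitimate since \(|\gamma_n|\le\rho([0,1])\), gives the mixture representation
\begin{equation*}
\psi(x)=\int_{[0,1]}e^{sx}\,d\rho(s),\qquad x\in\mathbb C .
\end{equation*}
Thus \(\psi\) is an entire function of exponential type at most \(1\). It is the Laplace transform (in \(-x\)) of a positive measure on \([0,1]\). The task is to show that the Laguerre--P\'olya condition forces \(\rho\) to be a single atom \(\gamma_0\delta_q\). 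The converse direction is immediate. If \(\psi=\gamma_0e^{qx}\), then \(\gamma_n=\gamma_0q^n\) and \(\Delta^m\gamma_n=\gamma_0q^n(1-q)^m\ge0\) for \(q\in[0,1]\). Also \(e^{qx}\in\LP\).

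For the forward direction, the key step is a zero-counting argument. Along the imaginary axis, \(\psi(iy)=\int e^{isy}\,d\rho(s)\) is the Fourier transform of a positive measure. So \(|\psi(iy)|\le\psi(0)=\gamma_0\), and \(\psi\) is bounded on \(i\R\). On the other hand, \(\psi\in\LP\) has only real zeros, with Hadamard form \(Cx^me^{\sigma x-\beta x^2}\prod_k(1+x/x_k)e^{-x/x_k}\). Exponential type rules out \(\beta>0\), because \(e^{-\beta x^2}\) alone would force order \(2\) growth on \(i\R\) unless it is cancelled by the product, and the product has genus at most \(1\). I would then compute
\begin{equation*}
|\psi(iy)|=|C|\,|y|^m\prod_k\bigl(1+y^2/x_k^2\bigr)^{1/2}\,e^{\beta y^2}.
\end{equation*}
The \(e^{-x/x_k}\) and \(e^{\sigma x}\) factors have modulus one at purely imaginary \(x\). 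Every factor here is \(\ge1\) and increasing in \(|y|\). Boundedness on \(i\R\) therefore forces \(\beta=0\), no zeros \(x_k\), and \(m=0\). Hence \(\psi=Ce^{\sigma x}\), and uniqueness of the Hausdorff witness (or simply matching moments) gives \(\rho=C\delta_\sigma\). This requires \(\sigma\in[0,1]\) and \(C=\gamma_0>0\), using \(\psi\not\equiv0\) and positivity of \(\rho\).

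I expect the main obstacle to be routine: justifying that the modulus identity on \(i\R\) holds for the canonical product. Each factor satisfies \(|(1+iy/x_k)e^{-iy/x_k}|=(1+y^2/x_k^2)^{1/2}\) exactly, and the product converges because \(\sum x_k^{-2}<\infty\). With that in hand, the mixture bound \(|\psi(iy)|\le\gamma_0\) finishes the argument. No growth-order theory is needed beyond the stated LP factorization.
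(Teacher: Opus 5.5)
Your proof is correct, but it runs along the imaginary axis where the paper runs along the real axis.

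The paper uses the same mixture $\psi(x)=\int_0^1 e^{sx}\,d\rho(s)$, but only for real $x$:
\begin{itemize}
\item positivity of the integrand gives $\psi>0$ on $\R$;
\item by real-rootedness $\psi$ then has no zeros at all, so $\psi=\gamma_0e^{\sigma x-\beta x^2}$;
\item the lower bound $\psi(x)\ge\gamma_0$ for $x\ge0$ is incompatible with Gaussian decay, so $\beta=0$;
\item $\gamma_1\ge0$ and $\Delta\gamma_0\ge0$ put $\sigma$ in $[0,1]$.
\end{itemize}
You instead set the positive-definiteness bound $|\psi(iy)|\le\gamma_0$ against the exact modulus of the Hadamard factorization on $i\R$. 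There each factor $|1+iy/x_k|$ and $e^{\beta y^2}$ is at least $1$, so the zeros and the Gaussian factor are removed by a single inequality.

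What each route buys:
\begin{itemize}
\item The paper's version is slightly more elementary. It needs only the shape of a zero-free $\LP$ function, not the modulus of the canonical product.
\item Yours shows exactly where real-rootedness enters. The bound $|1+iy/x_k|\ge1$ fails for nonreal $x_k$, and this is what happens for $(e^x-1)/x$, whose zeros $2\pi ik$ lie on $i\R$.
\item Yours also does not use the support of $\rho$ until the final step.
\end{itemize}

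Three small tightenings are worth making.
\begin{itemize}
\item Your claim that every factor is $\ge1$ is false for $|C|\,|y|^m$. This does not matter: $m=0$ because $\psi(0)=\gamma_0>0$, and then $|C|=\gamma_0$. The bound becomes $e^{\beta y^2}\prod_k(1+y^2/x_k^2)^{1/2}\le1$, which forces $\beta=0$ and the absence of zeros at once.
\item Once you have that, your heuristic sentence about exponential type and genus is superfluous and can be dropped.
\item The uniqueness clause of Proposition~\ref{prop:witness}(ii) only compares measures on $[0,1]$. So derive $\sigma\in[0,1]$ first, for instance from $0\le\gamma_1=\gamma_0\sigma\le\gamma_0$ as the paper does, or from Fourier uniqueness applied to $\int e^{isy}\,d\rho(s)=\gamma_0e^{i\sigma y}$. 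Identifying $\rho=\gamma_0\delta_\sigma$ is not actually needed for the statement.
\end{itemize}
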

\begin{proof}
If \(\psi=\gamma_0e^{qx}\) with \(q\in[0,1]\), then
\(\gamma_n=\gamma_0q^n\), a Hausdorff moment sequence (of
\(\gamma_0\delta_q\)), hence completely monotone.

Conversely, let \((\gamma_n)\) be completely monotone. If \(\gamma_0=0\),
then \(0\le\gamma_n\le\gamma_0\) forces \(\gamma\equiv0\) and \(\psi\equiv0\),
excluded; so \(\gamma_0>0\). By Proposition~\ref{prop:witness}(ii) there is
a finite positive measure \(\rho\ne0\) on \([0,1]\) with
\(\gamma_n=\int_0^1s^n\,d\rho(s)\). Absolute convergence, bounded by
\(e^{|x|}\rho([0,1])\), permits interchange of the sum and integral:
\begin{equation}\label{eq:egf-witness}
\psi(x)=\sum_{n\ge0}\frac{x^n}{n!}\int_0^1s^n\,d\rho(s)
=\int_0^1e^{sx}\,d\rho(s),
\qquad x\in\R .
\end{equation}
The integrand is strictly positive for every real \(x\), so \(\psi>0\) on
\(\R\): \(\psi\) has no real zeros. A Laguerre--P\'olya function without
real zeros has the form \(\psi(x)=\gamma_0e^{\sigma x-\beta x^2}\) with
\(\beta\ge0\) \cite{levin}. If \(\beta>0\), then \(\psi(x)\to0\) as
\(x\to+\infty\). Equation~\eqref{eq:egf-witness} also gives
\(\psi(x)\ge\rho([0,1])=\gamma_0>0\) for all \(x\ge0\) (since
\(e^{sx}\ge1\) there), a contradiction. Hence \(\beta=0\) and
\(\gamma_n=\gamma_0\sigma^n\). Complete monotonicity forces
\(\gamma_1=\gamma_0\sigma\ge0\) and
\(\Delta\gamma_0=\gamma_0(1-\sigma)\ge0\), so \(\sigma\in[0,1]\).
\end{proof}

\begin{remark}[Role of real-rootedness]
The Laguerre--P\'olya hypothesis supplies the rigidity. General positive
exponential mixtures may have nonreal zeros: for example,
\((e^x-1)/x=\int_0^1e^{sx}\,ds\) has completely monotone coefficients
\(1/(n+1)\) and zeros \(2\pi ik\), \(k\ne0\). Within \(\LP\), positivity of
the witness measure collapses the zero set.
\end{remark}

\noindent
The next proposition quantifies the failure for one-factor
Laguerre--P\'olya functions.

\begin{proposition}[Exact difference table for one-factor atoms]
\label{prop:atoms}
Let \(0<q<1\), \(c\ge0\), and \(\gamma_n=(1+cn)q^n\), the coefficient
sequence of the Laguerre--P\'olya function \((1+cqx)e^{qx}\). Then for all
\(k\ge1\) and \(n\ge0\),
\begin{equation}\label{eq:atom-differences}
\Delta^k\gamma_n
=q^n(1-q)^{k-1}\bigl[(1-q)(1+cn)-ckq\bigr].
\end{equation}
Consequently \((\gamma_n)\) is positive; it is nonincreasing if and only if
\(c\le(1-q)/q\); and for any \(c>0\), taking \(n=0\) and
\(k>(1-q)/(cq)\) makes \eqref{eq:atom-differences} negative, in accordance
with Theorem~\ref{thm:lp-rigidity}.
\end{proposition}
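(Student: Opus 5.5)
I will compute the difference table directly, using the operator form \(\Delta=I-E\), where \(E\) is the shift \(Ea_n=a_{n+1}\). First I split the sequence as \(\gamma_n=q^n+c\,nq^n\). For the geometric part, \(\Delta^k q^n=q^n(1-q)^k\), which follows from \(\Delta q^n=q^n(1-q)\) by induction. For the linear-times-geometric part I will use the representation \(nq^n=q\,\partial_q(q^n)\). Since \(\Delta^k\) is a finite linear combination of shifts with coefficients independent of \(q\), it commutes with \(\partial_q\). Hence
\begin{equation*}
\Delta^k(nq^n)=q\,\partial_q\bigl[q^n(1-q)^k\bigr]
=nq^n(1-q)^k-kq^{n+1}(1-q)^{k-1}.
\end{equation*}
A check by induction on \(k\) using \(\Delta\) directly works equally well: \(\Delta(nq^n)=nq^n(1-q)-q^{n+1}\).

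Adding the two pieces gives
\begin{equation*}
\Delta^k\gamma_n=q^n(1-q)^k+c\bigl[nq^n(1-q)^k-kq^{n+1}(1-q)^{k-1}\bigr].
\end{equation*}
Factoring out \(q^n(1-q)^{k-1}\) yields \([(1-q)(1+cn)-ckq]\), which is \eqref{eq:atom-differences}. I will also confirm the generating-function claim: \((1+cqx)e^{qx}=\sum_n(q^n+cq\,nq^{n-1})x^n/n!\), so \(\gamma_n=(1+cn)q^n\). This function is in \(\LP\), being a real polynomial with a real zero times \(e^{qx}\).

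The consequences then follow in order.
\begin{enumerate}[label=\textup{(\roman*)},leftmargin=2.3em]
\item Positivity is immediate, since \(1+cn>0\) and \(q^n>0\).
\item For monotonicity, take \(k=1\): \(\Delta\gamma_n=q^n[(1-q)(1+cn)-cq]\). Its bracket is nondecreasing in \(n\) because \(c(1-q)\ge0\), so the minimum occurs at \(n=0\), where it equals \(1-q-cq\). Thus \(\Delta\gamma_n\ge0\) for all \(n\) if and only if \(c\le(1-q)/q\).
\item For the failure of complete monotonicity, take \(n=0\): the bracket becomes \((1-q)-ckq\), which is negative exactly when \(k>(1-q)/(cq)\). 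The prefactor \((1-q)^{k-1}\) is positive, so \(\Delta^k\gamma_0<0\). This agrees with Theorem~\ref{thm:lp-rigidity}, since for \(c>0\) the function is not a pure exponential.
\end{enumerate}

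No step is a real obstacle. The only place needing care is justifying that \(\partial_q\) commutes with \(\Delta^k\), or alternatively carrying out the induction cleanly. Everything else is sign bookkeeping.
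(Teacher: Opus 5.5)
Your proof is correct and follows the paper's argument: you use the same split \(\gamma_n=q^n+c\,nq^n\), the same formula \(\Delta^k(nq^n)=q^n(1-q)^{k-1}\bigl(n(1-q)-kq\bigr)\), and the same reduction of both the monotonicity criterion and the sign failure to \(n=0\). The only difference is that you derive the \(nq^n\) formula by applying \(q\,\partial_q\) to \(\Delta^kq^n=q^n(1-q)^k\), which is valid because \(\Delta^k\) has \(q\)-independent coefficients; the paper instead proves it by induction on \(k\), the fallback you also mention.
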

\begin{proof}
By linearity it suffices to difference the two pieces. First,
\(\Delta^kq^n=(1-q)^kq^n\). Second, we claim
\(\Delta^k(nq^n)=q^n(1-q)^{k-1}\bigl(n(1-q)-kq\bigr)\) for \(k\ge1\): the
case \(k=1\) is \(nq^n-(n+1)q^{n+1}=q^n(n(1-q)-q)\), and if the formula
holds for \(k\), then
\begin{equation}
\begin{aligned}
\Delta^{k+1}(nq^n)
&=q^n(1-q)^{k-1}\bigl(n(1-q)-kq\bigr)
\\
&\quad-q^{n+1}(1-q)^{k-1}
\bigl((n+1)(1-q)-kq\bigr)\\
&=q^n(1-q)^{k-1}
\Bigl[n(1-q)^2-q(1-q)-kq(1-q)\Bigr]\\
&=q^n(1-q)^{k}\bigl(n(1-q)-(k+1)q\bigr).
\end{aligned}
\end{equation}
Adding \(c\) times the second formula to the first gives
\eqref{eq:atom-differences}. Monotonicity reduces to the worst case
\(n=0\), \(k=1\), i.e.\ \((1-q)\ge cq\).
\end{proof}

\begin{counterexample}[The Wang--Yang problem has a negative answer]
\label{cx:wang-yang}
The Laguerre--P\'olya function
\begin{equation}
\begin{aligned}
\psi(x)=\Bigl(1+\frac{4x}{15}\Bigr)e^{2x/3}
&=e^{14x/15}\Bigl(1+\frac{x}{15/4}\Bigr)\\
&\quad \times e^{-x/(15/4)}
\end{aligned}
\end{equation}
has coefficients \(\gamma_n=(1+\tfrac{2n}{5})(\tfrac23)^n\), positive and
strictly decreasing since \(c=\tfrac25\le(1-q)/q=\tfrac12\), yet
\begin{equation}
\Delta^2\gamma_0
=1-2\cdot\frac{14}{15}+\frac45
=-\frac{1}{15}<0 .
\end{equation}
\end{counterexample}
\begin{proof}
Instantiate Proposition~\ref{prop:atoms} at \(q=2/3\), \(c=2/5\):
\begin{equation}
\Delta^k\gamma_n
=\Bigl(\frac23\Bigr)^n\Bigl(\frac13\Bigr)^{k-1}
\left[\frac13\left(1+\frac{2n}{5}\right)-\frac{4k}{15}\right].
\end{equation}
At \((n,k)=(0,2)\), this equals
\begin{equation}
\frac13\left[\frac13-\frac{8}{15}\right]
=-\frac{1}{15}.
\end{equation}
\end{proof}

\section{Conclusion and open problems}\label{sec:concluding}

The paper uses one proof architecture: normalize a class question, compute or
transport the unique admissible witness, and decide its sign. The abstract
determinacy step is elementary. Its value is realized when the resulting sign
problem has a recognizable structure.

Three witness modalities account for the retained applications. Laplace
witnesses produce the digamma triple product, Simon's complement measure, and
the Jacobi-polynomial density for integer gamma quotients. Stieltjes witnesses
and Pick-sign certificates give the Ramanujan recognition theorem, the Wakrim
upgrade, and the arsinh separation. Hausdorff witnesses convert Bernstein
increments and coefficient sequences into signed atoms or finite differences.
These reductions replace derivative towers by classical polynomial geometry,
two-point convexity, boundary arguments, or exact arithmetic.

The method has a clear limitation. It simplifies a problem only when the
inverse datum can be computed and signed more readily than the initial class
condition. The two questions below identify where the present witnesses are
explicit and their remaining sign geometry is unresolved.

\begin{problem}[The Bernstein region of the Baricz family]
\label{prob:baricz-real}
Determine the exact region
\begin{equation}\label{eq:baricz-open-region}
\{(a,b)\in(0,\infty)^2:g_{a,b}\in\BF\}
\end{equation}
for the family \eqref{eq:g-def}. Theorem~\ref{thm:baricz-classification},
Corollary~\ref{cor:baricz-cancellation}, and
Propositions~\ref{prop:baricz-interlacing} and~\ref{prop:baricz-odd} decide
the orthogonal strip, the integral cancellation values, the interlacing
range, and the odd intermediate windows. The first unresolved even window is
integer \(a=4\), \(b\in(2,3)\); nonintegral parameters also remain open. In
these regimes the task is to control the interior sign of a
quasi-orthogonal or nonpolynomial Laplace witness
\cite{driver-jordaan-qoj}.
\end{problem}

\begin{problem}[Complete Bernstein interpolation of GRWS sequences]
\label{prob:grws-cbf}
On the affirmative range \(N<D\le0\) of Theorem~\ref{thm:grws}, the explicit
interpolant \eqref{eq:grws-interpolant} has the atomic L\'evy measure
\eqref{eq:grws-levy} and lies outside \(\CBF\). Decide whether the same
sequence \((p^n+N)/(p^n+D)\) admits another interpolant in \(\CBF\). This is
the Pick-class version of the interpolation problem arising from the
weighted shifts of \cite{benhida-curto-exner}.
\end{problem}

\appendix
\renewcommand{\thetheorem}{\Alph{section}.\arabic{theorem}}

\section{Exact rational certificates}\label{app:certificates}

This appendix records the finite exact computations invoked in the text. All
statements are equalities or inequalities between explicit rationals,
verifiable by integer arithmetic.

\subsection[The Baricz certificate at (2,3)]{The Baricz certificate at \((2,3)\)
(Remark~\ref{rem:baricz})}

For \(g(y)=g_{2,3}(y)=y(y+1)/((y+3)(y+4))\), the Jacobi witness
\eqref{eq:jacobi-witness} is
\(\rho_{2,3}(t)=te^{-3t}(-6+12e^{-t})\), negative exactly for \(t>\log2\).
Independently, exact differentiation gives
\begin{equation}
g'(y)=\frac{6\,(y^2+4y+2)}{(y+3)^2(y+4)^2},
\qquad
g''(y)=-\frac{12\,(y^3+6y^2+6y-10)}{(y+3)^3(y+4)^3},
\end{equation}
\begin{equation}
g'''(y)=\frac{36\,(y^4+8y^3+12y^2-40y-94)}{(y+3)^4(y+4)^4}.
\end{equation}
At \(y=1\) the quartic evaluates to \(1+8+12-40-94=-113\), so
\begin{equation}
g'''(1)=\frac{36\cdot(-113)}{4^4\cdot5^4}
=-\frac{4068}{160000}
=-\frac{1017}{40000}<0,
\end{equation}
confirming \(g\notin\BF\) by the one-point obstruction, in agreement with
Theorem~\ref{thm:baricz-classification}.

\subsection{The Berg--Pedersen table (Counterexample~\ref{cx:berg-pedersen})}

The source recurrence \cite{berg-pedersen-horn} is replayed exactly:
\begin{equation}
\rho_0=1,\qquad
\rho_n=\sum_{k=0}^{n-1}\rho_k\,
\frac{2\,(-1)^{n-1-k}}{(n-k+1)(n-k+2)}\quad(n\ge1),
\end{equation}
\begin{equation}
s_n=1+2\sum_{k=1}^{n}(-1)^k\rho_k,
\qquad
t_n=\sum_{k=0}^{n}(-1)^k\binom nk s_k ,
\end{equation}
and \(a_0=1\), \(a_n=1/t_n-1/t_{n-1}\). Exact rational arithmetic gives
\begin{equation}
\begin{array}{c|c|c}
n & t_n & a_n\\
\hline
0 & 1 & 1\\
1 & 2/3 & 1/2\\
2 & 5/9 & 3/10\\
3 & 67/135 & 72/335\\
4 & 371/810 & 4185/24857\\
5 & 1465/3402 & 75492/543515\\
6 & 209081/510300 & 36295938/306303665\\
7 & 85961/218700 & 1860116400/17972811841
\end{array}
\end{equation}
and
\begin{equation}
H_7(0)=a_0-7a_1+21a_2-35a_3+35a_4-21a_5+7a_6-a_7
=-\frac{28629387882812}{1395498975394445}.
\end{equation}

\subsection{The one-factor atom table (Proposition~\ref{prop:atoms})}

With \(\gamma_n=(1+\tfrac{2n}{5})(\tfrac23)^n\) (that is, \(q=2/3\),
\(c=2/5\)):
\begin{equation}
\gamma_0=1,\quad
\gamma_1=\frac{14}{15},\quad
\gamma_2=\frac45,\quad
\gamma_3=\frac{88}{135};
\qquad
\Delta^2\gamma_n=\Bigl(\frac23\Bigr)^{n}\frac{2n-3}{45},
\end{equation}
so \(\Delta^2\gamma_0=-1/15\) and \(\Delta^2\gamma_1=-2/135\), while
\(\Delta^2\gamma_n>0\) for \(n\ge2\). For \(c=1/100\) the first negative
entry of \eqref{eq:atom-differences} at \(n=0\) occurs at \(k=51\).

\section*{Declaration of competing interest}

The author declares no competing interests.

\section*{Funding}

The author received no specific funding for this work.

\section*{Data availability}

No empirical data were used. Every numerical claim reduces to finite exact
rational arithmetic, reproducible from the formulas in
Appendix~\ref{app:certificates}.

\section*{Declaration of generative AI and AI-assisted technologies in the
manuscript preparation process}

During the preparation of this work, the author used OpenAI ChatGPT and
OpenAI Codex to support literature triage, organization, language editing,
LaTeX formatting, consistency checks, and submission preparation. After using
these tools, the author reviewed and edited the content as needed, verified
the mathematical claims and cited sources, and takes full responsibility for
the content of the manuscript.

\end{document}